\newtheorem{theorem}{Theorem}
\newtheorem{lemma}[theorem]{Lemma}
\newtheorem{proposition}[theorem]{Proposition}
\newtheorem{corollary}[theorem]{Corollary}
\theoremstyle{definition}
\newtheorem{remark}[theorem]{Remark}
\newtheorem{example}[theorem]{Example}
\newcommand{\D}{\mathcal D}
\renewcommand{\H}{\mathcal H}
\newcommand{\F}{\mathcal F}
\newcommand{\U}{\mathcal U}
\renewcommand{\int}{{\rm int}}
\newcommand{\R}{\mathbb R}
\newcommand{\Z}{\mathbb Z}
\newcommand{\N}{\mathbb N}
\newcommand{\Q}{\mathbb Q}
\def\la{\langle}
\def\ra{\rangle}
\newcommand{\co}{\mathfrak c}
\newcommand{\ES}{\mathrm{ES}}
\newcommand{\FC}{\mathrm{FC}}
\newcommand{\VFC}{\mathrm{VFC}}
\newcommand{\TFC}{\mathrm{TFC}}
\author{Marek Balcerzak}
\address{Institute of Mathematics, Lodz University of Technology, W\'olcza\'nska 215, 90-924
\L\'od\'z, Poland}
\email{marek.balcerzak@p.lodz.pl}
\author{Tomasz Natkaniec}
\address{Institute of Mathematics,
Faculty of Mathematics, Physics and Informatics,
University of Gda\'nsk,
Wita Stwosza 57, 80-952 Gda\'nsk, Poland}
\email{mattn@mat.ug.edu.pl}
\author{Ma\l gorzata Terepeta}
\address{Center of Mathematics and Physics,  al. Politechniki 11,
and Institute of Mathematics, W\'olcza\'nska 215, Lodz University of Technology,   90-924
	\L\'od\'z, Poland}
\email {malgorzata.terepeta@p.lodz.pl}
\title{Families of feebly continuous functions and their properties}
\subjclass[2010]{26B05, 54C05, 54C30, 15A03}
\keywords{feebly continuity, very feebly continuity, two-feebly continuity, lineability, algebrability}
\date{}
\begin{document}

\begin{abstract}
Let $f\colon\R^2\to\R$. The notions of feebly continuity and very feebly continuity of $f$ at a point $\la x,y\ra\in\R^2$
were considered by I. Leader in 2009. We study properties of the sets $FC(f)$ (respectively, $VFC(f)\supset FC(f)$)
 of points at which $f$ is feebly continuous (very feebly continuous).
 We prove that $VFC(f)$ is densely nonmeager, and, if $f$ has the Baire property (is measurable),
 then $FC(f)$ is residual (has full outer Lebesgue measure). We describe several
 examples of functions $f$ for which $FC(f)\neq VFC(f)$. Then we consider the notion of two-feebly continuity which is strictly weaker than very feebly continuity. We prove that the set of points where (an arbitrary)
 $f$ is two-feebly continuous forms a residual set of full outer measure.
 Finally, we study the existence of large algebraic structures inside or outside various sets of feebly continuous functions.
\end{abstract}
\maketitle
\section{Introduction}
The notion of feebly continuous real-valued functions has been recently considered by Dales and Leader
(cf. \cite{L}) in the case where the domain is $\R$ or $\R^2$. Feebly continuity was known earlier since the respective definition appeared in Thomson's book \cite[p. 70]{T}. Its idea arose probably from the theory of cluster sets for 
arbitrary functions (cf. \cite{C}).

Following \cite {T}, a function $f\colon \R\to\R$ is called {\em feebly continuous at a point} $x\in\R$ if there is a sequence $x_n\to x$
with terms different from $x$ such that $f(x_n)\to f(x)$. Clearly, we may assume that $(x_n)$ is monotone.
Leader in \cite{L} assumed that $(x_n)$ is strictly decreasing, that is $x_n\searrow x$.
For simplicity, we will consider this version. So, we say that $f$ is {\em feebly continuous} at $x$ whenever there exists
a sequence $x_n\searrow x$ such that $f(x_n)\to f(x)$.

It was shown in \cite{L} that, for every function $f\colon\R\to\R$, the set of points of at which $f$ is feebly continuous
is co-countable. Let us observe that, given a countable infinite set $A\subset\R$, there is a function $f\colon \R\to\R$ such that $f$ fails to be feebly continuous exactly at points in $A$. Namely, let $f(x):=0$ for $x\in\R\setminus A$, and $f(a_n):=n$ whenever $(a_n)$ is 
a one-to-one enumeration of $A$. Note that we  can modify $f$ to produce a function that is extremally bad, for instance, nonmeasurable. Indeed, we pick a nonmeasurable
set $B\subset\R\setminus A$ and put $\widetilde{f}:=f-\chi_{B}$ where $f$ is as before. (As usual, $\chi_B$ stands for the characteristic function of $B$.)

The case where a function is defined on the plane is more interesting. According to \cite{L}, we say that
$f\colon\R^2\to\R$ is {\em feebly continuous at a point} $\la x,y\ra\in\R^2$  if there exist sequences $x_n\searrow x$ and $y_m\searrow y$ such that
$$
\lim_{n\to\infty}\lim_{m\to\infty}f(x_n,y_m)=f(x,y).
$$
Surprisingly, we have observed that this notion depends on the order of limits -- the respective example will be presented below.
Hence, if we change the order of limits in the above definition as follows,
$$
\lim_{m\to\infty}\lim_{n\to\infty}f(x_n,y_m)=f(x,y),
$$
we obtain a different notion; then $f$ will be called {\em reverse feebly continuous} at $\la x,y\ra$.

\begin{example} \label{exa1}
Let $f\colon\R^2\to\R$ be given by
\begin{displaymath}
f(x,y):=\left\{ \begin{array}{ll}
\frac{x\sin(1/x)+2y}{x+y} & \textrm{if $x+y\neq 0$}\\
2 & \textrm{otherwise.}
\end{array} \right.
\end{displaymath}
Let $x_n\searrow 0$ and $y_m\searrow 0$. Then $x_n+y_m\neq 0$ for any $m$ and $n$. Hence 
$\lim_{m\to\infty}f(x_n,y_m)=\sin(1/x_n)$
and $\lim_{n\to\infty}\sin(1/x_n)$ either does not exist or is between $-1$ and $1$.
Consequently, $f$ is not feebly continuous at $\la 0,0\ra$.
On the other hand, for any sequences $x_n\searrow 0$ and $y_m\searrow 0$, we have
$\lim_{n\to\infty}f(x_n,y_m)=2$ for every $m$. Hence
$$\lim_{m\to\infty}\lim_{n\to\infty}f(x_n,y_m)=2=f(0,0)$$
which shows that $f$ is reverse feebly continuous at $\la 0,0\ra$.
\end{example}

\begin{proposition} \label{wst}
If $f\colon\R^2\to\R$ is continuous at $\la x,y\ra$ then $f$ is feebly continuous and reverse feebly continuous at
$\la x,y\ra$.
\end{proposition}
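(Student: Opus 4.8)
The plan is to exploit the freedom we have in choosing the two sequences $x_n\searrow x$ and $y_m\searrow y$: joint continuity at $\la x,y\ra$ forces $f$ to be close to $f(x,y)$ on a whole square neighbourhood, and the only genuine difficulty is that the inner limit $\lim_{m\to\infty}f(x_n,y_m)$ need not exist for an arbitrary choice of $(y_m)$. I would remove this difficulty by a diagonal selection. First I would write out continuity at $\la x,y\ra$ quantitatively: for each $k\in\N$ choose $\delta_k>0$, with $\delta_k\searrow 0$, so that $|f(u,v)-f(x,y)|<1/k$ whenever $|u-x|<\delta_k$ and $|v-y|<\delta_k$. Then pick any $x_n\searrow x$ with $|x_n-x|<\delta_n$, and start from any auxiliary sequence $u_j\searrow y$.

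Next comes the key step. For each fixed $n$ the sequence $\bigl(f(x_n,u_j)\bigr)_j$ is bounded: once $|u_j-y|<\delta_n$ (which holds for all large $j$) we have $|f(x_n,u_j)-f(x,y)|<1/n$, and only finitely many earlier terms remain. Hence, by a standard diagonal argument (Bolzano--Weierstrass applied successively for $n=1,2,\dots$ and then passing to the diagonal subsequence), I can extract $y_m\searrow y$ from $(u_j)$ such that $\ell_n:=\lim_{m\to\infty}f(x_n,y_m)$ exists in $\R$ for every $n$. Since a subsequence of a strictly decreasing sequence is again strictly decreasing, the condition $y_m\searrow y$ is preserved.

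Finally I would read off the iterated limit. For each $n$, letting $m\to\infty$ in the inequality $|f(x_n,y_m)-f(x,y)|<1/n$ (valid for all large $m$) yields $|\ell_n-f(x,y)|\le 1/n$; therefore $\lim_{n\to\infty}\ell_n=f(x,y)$, that is, $\lim_{n\to\infty}\lim_{m\to\infty}f(x_n,y_m)=f(x,y)$, which is precisely feeble continuity at $\la x,y\ra$. For reverse feeble continuity I would simply apply the statement just proved to the transposed function $\tilde f(s,t):=f(t,s)$, which is continuous at $\la y,x\ra$ exactly when $f$ is continuous at $\la x,y\ra$; unravelling the definitions converts the feeble continuity of $\tilde f$ at $\la y,x\ra$ into the reverse feeble continuity of $f$ at $\la x,y\ra$.

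The only real obstacle is the possible nonexistence of the inner limit for a naive choice of sequences, and the diagonal extraction is exactly what overcomes it; everything else is a direct application of the $\varepsilon$--$\delta$ definition of continuity. It is worth noting that the argument never needs $x_n$ and $y_m$ to avoid any exceptional set, so the same sequences simultaneously witness both notions once the diagonal choice of $(y_m)$ is made.
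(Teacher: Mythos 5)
Your proof is correct and follows essentially the same route as the paper's: both arguments control $f$ to within $1/n$ of $f(x,y)$ on shrinking neighbourhoods of $\la x,y\ra$ and then use a successive-subsequence (diagonal) extraction to force the inner limits $\lim_{m\to\infty}f(x_n,y_m)$ to exist. The only cosmetic difference is that you deduce reverse feeble continuity by transposing the variables, whereas the paper just notes the argument is symmetric.
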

\begin{proof}
Choose a sequence of open boxes $(I_n\times J_n)$  and a sequence $x_n\searrow x$ with the following properties
for each $n\in\N$: 
\begin{itemize}
\item
$\la x,y\ra\in I_n\times J_n$; 
\item
$I_m\times J_m\subset I_n\times J_n$ whenever $m>n$;
\item
$|f(s,t)-f(x,y)|<\frac{1}{n}$ for $\la s,t\ra\in I_n\times J_n$;
\item
$x_n\in I_n$.
\end{itemize}
Fix a sequence $(y_m^1)$ with terms in $J_1$ such that $f(x_1,y^1_m)$ converges to some $\alpha_1$. This is possible because the set $f[\{ x_1\}\times J_1]$ is bounded. Note that $|\alpha_1- f(x,y)|\le 1$. Proceeding by induction, for every $n\in\N$ we choose a sequence $(y^n_m)_m$ with terms in $J_n$ such that 
\begin{itemize}
\item
$(y^{n+1}_m)_m$ is a subsequence of $(y^n_m)_m$;
\item
$(f(x_n,y^n_m))_m$ converges to some $\alpha_n\in [f(x,y)-\frac{1}{n}, f(x,y)+\frac{1}{n}]$.
\end{itemize}
Finally, put $y_m:=y^m_m$ for every $m\in\mathbb{N}$. Then for every $n$, the sequence $(y_m)_{m>n}$ is a subsequence of $(y^n_m)_m$. Hence $\lim_m f(x_n,y_m)=\alpha_n$, and consequently, $\lim_n\lim_m f(x_n,y_m)=\lim_n\alpha_n=f(x,y)$. Thus $f$ is feebly continuous at $\la x,y\ra$.
 Similarly, we show that $f$ is reverse feebly continuous at $\la x,y\ra$.
\end{proof}

Easy examples show that the implication in Proposition \ref{wst} cannot be reversed in general.

\section{Points of feebly continuity and very feebly continuity}

In \cite{L}, Leader considered another notion which is weaker than feeble continuity for functions defined on the plane.
Namely, $f\colon\R^2\to\R$ is called {\em very feebly continuous at} a point $\la x,y\ra$ if there exist
a sequence $x_n\searrow x$ and, for each $n\in\N$, a sequence $y_m^{(n)}\searrow y$ such that
\begin{equation} \label{eq3}
\lim_{n\to\infty}\lim_{m\to\infty}f(x_n,y_m^{(n)})=f(x,y).
\end{equation}
It was proved in \cite{L} that every function $f\colon\R^2\to\R$ has a point of very feebly continuity.
By a similar reasoning, we will prove a sharper version of this result.

For $f\colon \R^2\to \R$, denote by $FC(f)$ (respectively, $VFC(f)$), the sets of feebly continuity (very feebly continuity)
points of the function $f$. Plainly, $FC(f)\subset VFC(f)$.
The following lemma is a direct consequence of the definition and formula (\ref{eq3}).

\begin{lemma} \label{le1} 
Let $f\colon \R^2\to \R$. A point $z=\la x,y\ra$ belongs to $\R^2\setminus VFC(f)$ if and only if
there exist an interval $(p,q)$ containing $f(z)$,
a real $t>0$ and real numbers $r_s>0$, chosen for every $s\in(0,t)$, such that $f$ does not 
attain values in $(p,q)$ at any point of the set
$$G(z):=\{\la x+a,y+b\ra\colon 0<a<t,\; 0<b<r_a\} .$$
\end{lemma}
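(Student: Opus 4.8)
The plan is to prove both implications of the equivalence directly, reading the displayed condition as a statement about how $f$ behaves on the ``upper right'' quadrant at $z$. Throughout I write $v:=f(z)$ and, for a point reached as $\la x+a,y+b\ra$ with $a,b>0$, I think of $a$ as the horizontal offset and $b$ as the vertical one. It is convenient to record, for a fixed interval $(p,q)\ni v$, the ``hitting set'' $S:=\{\la a,b\ra:a,b>0,\ f(x+a,y+b)\in(p,q)\}$.

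First I would dispose of the easy implication: if such $(p,q)\ni v$, $t>0$ and $(r_s)_{s\in(0,t)}$ exist, then $z\notin VFC(f)$. Suppose toward a contradiction that $z\in VFC(f)$, witnessed by $x_n\searrow x$ and $y_m^{(n)}\searrow y$ with $\alpha_n:=\lim_m f(x_n,y_m^{(n)})$ existing and $\alpha_n\to v$. For all large $n$ we have $x_n-x\in(0,t)$; writing $a_n:=x_n-x$, for each such $n$ and all large $m$ we get $y_m^{(n)}-y\in(0,r_{a_n})$, so $\la x_n,y_m^{(n)}\ra\in G(z)$ and hence $f(x_n,y_m^{(n)})\notin(p,q)$. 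Since $\R\setminus(p,q)$ is closed, $\alpha_n\in\R\setminus(p,q)$ for all large $n$, and then $v=\lim_n\alpha_n\in\R\setminus(p,q)$, contradicting $v\in(p,q)$.

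For the converse I would argue by contraposition: assuming that no such data exist, I construct witnesses for $z\in VFC(f)$. The main step is to unwind the universal quantifier over the functions $(r_s)$. Fixing $(p,q)\ni v$ and setting $\beta(a):=\inf\{b>0:\la a,b\ra\in S\}$ (with $\inf\emptyset=+\infty$), a region $G(z)$ avoids $S$ exactly when $r_a\le\beta(a)$ for every $a\in(0,t)$; consequently a \emph{positive} choice of $(r_s)$ with $G(z)$ disjoint from $S$ exists iff $\beta(a)>0$ for all $a\in(0,t)$. Hence data for $(p,q)$ exist iff there is $t>0$ with $\beta(a)>0$ on $(0,t)$; negating this, ``no data for $(p,q)$'' means that arbitrarily small offsets $a>0$ satisfy $\beta(a)=0$, i.e.\ admit a sequence $b\searrow0$ along which $f(x+a,y+b)\in(p,q)$. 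This column-wise reformulation via $\beta$ is the crux, and the place to be careful, precisely because the hypothesis controls all choices of $(r_s)$ simultaneously; once it is in hand the rest is routine.

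Applying this to the shrinking symmetric intervals $(v-\tfrac1n,v+\tfrac1n)$ with $t=\tfrac1n$, I obtain for each $n$ an offset $a_n\in(0,\tfrac1n)$ together with a sequence $y_m^{(n)}\searrow y$ for which $f(x+a_n,y_m^{(n)})\in(v-\tfrac1n,v+\tfrac1n)$ for all $m$. Since these values lie in a bounded interval, Bolzano--Weierstrass lets me pass to a subsequence in $m$ so that $\alpha_n:=\lim_m f(x+a_n,y_m^{(n)})$ exists and lies in $[v-\tfrac1n,v+\tfrac1n]$; then, passing to a subsequence in $n$, I may assume $a_n$ strictly decreasing, so that $x_n:=x+a_n\searrow x$. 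Finally $\alpha_n\to v$ gives $\lim_n\lim_m f(x_n,y_m^{(n)})=v$, whence $z\in VFC(f)$, completing the proof. The only genuine obstacle is the reformulation through $\beta$; the remaining monotonicity and convergence adjustments are straightforward extractions of subsequences.
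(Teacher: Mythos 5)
Your proof is correct. The paper itself gives no argument for this lemma---it is stated as ``a direct consequence of the definition and formula (\ref{eq3})''---and what you have written is precisely the careful unwinding of that definition: the easy direction by closedness of $\R\setminus(p,q)$, and the converse via the reformulation $\beta(a):=\inf\{b>0:\la a,b\ra\in S\}$, which correctly handles the universal quantifier over the radii $(r_s)$ and yields the witnessing sequences after the Bolzano--Weierstrass and subsequence extractions.
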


\begin{theorem} \label{PP1}
For any function $f\colon\R^2\to\R$, the set VFC(f) is densely nonmeager,
that is, its intersection with every box $(\alpha,\beta)\times(\gamma,\delta)$ is nonmeager.
\end{theorem}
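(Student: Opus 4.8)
The plan is to argue by contradiction. Fix a box $B=(\alpha,\beta)\times(\gamma,\delta)$ and suppose $VFC(f)\cap B$ is meager, so its complement in $B$ is comeager. By Lemma \ref{le1} the set $\R^2\setminus VFC(f)$ is the countable union, over rationals $p<q$ and $t>0$, of the sets $A_{p,q,t}$ of points $\la x,y\ra$ with $f(x,y)\in(p,q)$ such that for every $x'\in(x,x+t)$ the slice $f(x',\cdot)$ eventually avoids $(p,q)$ as the second variable decreases to $y$. I would first record the convenient reformulation, read off from (\ref{eq3}) and Lemma \ref{le1}: writing $\Gamma(x',y)$ for the set of all subsequential limits of $f(x',y'')$ as $y''\searrow y$ (exactly the admissible values of the inner limit in (\ref{eq3})), one has $\la x,y\ra\in VFC(f)$ precisely when $\liminf_{x'\searrow x}\operatorname{dist}\!\bigl(f(x,y),\Gamma(x',y)\bigr)=0$.

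The main tool is a one--sided Kuratowski--Ulam principle that needs no Baire property: if $S\subseteq\R^2$ and the set $\{\,y:S_y\text{ is nonmeager}\,\}$ is nonmeager, then $S$ is nonmeager. This is merely the contrapositive of the elementary half of the Kuratowski--Ulam theorem---that a meager set has meager horizontal sections for comeagerly many $y$---which follows by enclosing $S$ in a meager $F_\sigma$ set and applying Kuratowski--Ulam to each closed nowhere dense piece (these do have the Baire property). Consequently it suffices to exhibit a nonmeager set of heights $y\in(\gamma,\delta)$ for each of which $\{\,x\in(\alpha,\beta):\la x,y\ra\in VFC(f)\,\}$ is nonmeager; this detour is essential, since the individual pieces $A_{p,q,t}$ may themselves be nonmeager and carry no Baire property, so Kuratowski--Ulam cannot be applied to them directly. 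To populate a fixed horizontal line with very feebly continuous points I would use that $\limsup_{y''\searrow y}f(x',y'')$ and $\liminf_{y''\searrow y}f(x',y'')$ always lie in $\Gamma(x',y)$; hence $\la x,y\ra\in VFC(f)$ whenever $f(x,y)$ is a right--hand cluster value, in the variable $x'$, of $U_y(x'):=\limsup_{y''\searrow y}f(x',y'')$ or of $L_y(x'):=\liminf_{y''\searrow y}f(x',y'')$. The one--variable result from the Introduction, applied to $U_y$ and $L_y$, produces for each $y$ a co--countable set of $x$ at which $U_y$ (resp.\ $L_y$) is right--feebly continuous, i.e.\ $U_y(x_n)\to U_y(x)$ along some $x_n\searrow x$; together with the sandwich $L_y(x)\le f(x,y)\le U_y(x)$, which by the same one--variable theorem applied columnwise fails only on a countable set of $y$ in each column, this is what I would use to force $\la x,y\ra\in VFC(f)$.

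The hard part, and the step I would treat most carefully, is exactly this matching: aligning the target value $f(x,y)$, which lives on the column $x$, with the transverse cluster sets $\Gamma(x',y)$ at the neighbouring columns $x'\searrow x$. The obstruction is the asymmetry between the inner limit (in $y$) and the outer limit (in $x$) together with the complete absence of a Baire property for the level sets $\{f\in(p,q)\}$; a Sierpi\'nski--type configuration shows that the two directions cannot be decoupled by sections alone, and in particular the naive sufficient condition ``the value is itself an inner cluster value'' is too weak. I therefore expect the decisive step to be a diagonal construction carried out on a generic line $y$: enumerating, column by column, the countably many heights at which a chosen approximating sequence violates the sandwich or cluster membership, one selects $x_n\searrow x$ so that these countably many exceptions are avoided simultaneously and $\operatorname{dist}\!\bigl(f(x,y),\Gamma(x_n,y)\bigr)\to0$ for a co--countable set of $x$. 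Making this selection uniform enough to survive the one--sided Kuratowski--Ulam principle is where I anticipate the real work; granting it, $VFC(f)\cap B$ is nonmeager, contradicting the assumption, and as $B$ was an arbitrary box, $VFC(f)$ is densely nonmeager.
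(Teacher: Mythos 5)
Your proposal has a genuine gap, and you essentially name it yourself: the whole argument hinges on a ``decisive step'' (matching the value $f(x,y)$ with the cluster sets $\Gamma(x_n,y)$ of nearby columns) which you only describe as a diagonal construction you ``expect'' to work, and then grant. The tools you actually establish cannot carry that step. Feeble continuity of $U_y$ and $L_y$ at $x$ only produces $x_n\searrow x$ with $U_y(x_n)\to U_y(x)$ or $L_y(x_n)\to L_y(x)$; combined with the sandwich $L_y(x)\le f(x,y)\le U_y(x)$ this yields $\la x,y\ra\in VFC(f)$ only when $f(x,y)$ equals one of the two extreme values, not when it lies strictly between them (cluster sets need not be intervals). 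Worse, the sandwich itself, which you control only column-by-column (co-countably many $y$ in each column), can fail on a nonmeager set: for $f=\chi_A$ as in Example \ref{E3}, the set $A$ is nonmeager yet meets every vertical line in at most two points, so $U_y\equiv L_y\equiv 0$ everywhere and the sandwich fails exactly on $A$. Since a planar set all of whose vertical sections are countable need not be meager, no Fubini-type or Kuratowski--Ulam argument can convert your column-wise exceptional sets into a two-dimensionally small set; this is precisely the Sierpi\'nski obstruction you allude to, and your proposal does not overcome it.

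The paper's proof avoids this entirely by never trying to certify very feeble continuity at any point. Using Lemma \ref{le1}, it splits $\R^2\setminus VFC(f)$ into countably many types $S_{p,q}$ ($p,q\in\Q$), assumes $H:=VFC(f)\cap B$ is meager, and applies Kuratowski--Ulam to $H$ (legitimate, as meager sets automatically have the Baire property). Two successive density selections --- first, inside a good vertical section, a type $S_{p,q}$ whose trace is dense in some rational interval relative to the complement of $H_x$; second, among columns, a parameter set $T_{p,q,u,v}$ dense in a suitable subinterval --- then produce two points of the \emph{same} type $(p,q)$, one of which lies in the forbidden region $G(z)$ of the other. Since both points have $f$-values in $(p,q)$ while $f$ omits all values in $(p,q)$ on $G(z)$, this is an outright contradiction. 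The meagerness assumption is thus used to force a collision between two points of one type, not to build witnesses of very feeble continuity; that is the idea your proposal is missing.
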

\begin{proof}
Suppose that $f$ is not very feebly continuous at a point $z=\la x,y\ra\in\R^2$.
Then pick an interval $(p,q)$ and the respective set $G(z)$, as in Lemma \ref{le1}.
We can assume that $p,q$ are rational.
In this case, we say that $z$ is of type $(p,q)$. We denote by $S_{p,q}$ the set of all points of type $(p,q)$.

Fix any box $B:=(\alpha,\beta)\times(\gamma,\delta)$.
For $E\subset\R^2$ and $x\in\R$, let $E_x:=\{y\in\R\colon \la x,y\ra\in E\}$ denote the respective vertical section of $E$.
Suppose that $H:=VFC(f)\cap B$ is meager. 
By the Kuratowski-Ulam theorem, we can find a meager set $M\subset\R$ of type $F_\sigma$ such that
$H_x$ is meager for each $x\in\R\setminus M$.  Fix $x\in (\alpha,\beta)\setminus M$. Then
$$(\gamma,\delta)\setminus H_x=\bigcup_{p,q\in\Q;\; p<q} (\gamma,\delta)\cap(S_{p,q}\setminus H)_x .$$
Since $(\gamma,\delta)\setminus H_x$ is non-meager, there are $p,q\in\Q$ for which the set $(\gamma,\delta)\cap (S_{p,q}\setminus H)_x$ is non-meager.
So, there is an interval $(\gamma',\delta')\subset (\gamma,\delta)$
with rational endpoints such that
$(S_{p,q}\setminus H)_x$ is dense in $(\gamma',\delta')\setminus H_x$. Now, we have
$$(\alpha,\beta)\setminus M=\bigcup_{p,q\in\Q;\; p<q}\;\; \bigcup_{u,v\in\Q;\; u<v} T_{p,q,u,v}$$
where 
$$T_{p,q,u,v}:=\{ x\in(\alpha,\beta)\setminus M\colon (S_{p,q}\setminus H)_x\mbox{ is dense in }(u,v)\setminus H_x\} .$$
Since the set $(\alpha,\beta)\setminus M$ is non-meager,
we find an interval $(\alpha',\beta')\subset (\alpha,\beta)$ and parameters $p,q,u,v\in\Q$ 
such that $T_{p,q,u,v}$ is dense in $(\alpha',\beta')\setminus M$. Fix any point 
$$z\in ((\alpha',\beta')\setminus M)\times (u,v))\cap S_{p,q}\setminus H\mbox{ 
with }z=\la x,y\ra.$$
Let $t$ be as in the definition of $G(z)$ where $G(z)$ is as above. Fix $x'\in (x,x+t)\cap T_{p,q,u,v}$. 
Next, choose $y'\in(u,v)\cap (G(z))_{x'}\cap(S_{p,q}\setminus H)_{x'}$.
Since $\la x',y'\ra\in S_{p,q}$, we have $f(x',y')\in (p,q)$. On the other hand, $\la x',y'\ra\in G(z)$, so $f(x',y')\notin (p,q)$.
Contradiction.
\end{proof}

In a contrast to Theorem \ref{PP1}, Leader in \cite{L} constructed, under {\bf CH},
an example of a function $f$, from $\R^2$ onto $\Z$, which is nowhere 
feebly continuous. We will prove that this is impossible for measurable functions and functions with the Baire property.
Moreover, in those cases, the set $FC(f)$ is large.


\begin{theorem} \label{TT1}
Let $f\colon\R^2\to\R$.
\begin{enumerate}
\item If $f$ has the Baire property then the intersection of $FC(f)$ with every nonmeager set $B\subset\R^2$ having the
Baire property contains a product of two perfect subsets of $\R$. Consequently, $FC(f)$ is residual.
\item If $f$ is Lebesgue measurable then the intersection of $FC(f)$ with every measurable set $B\subset\R^2$ of positive measure contains a product of two perfect subsets of $\R$.
Consequently, $FC(f)$ has full outer Lebesgue measure. 
\end{enumerate}
\end{theorem}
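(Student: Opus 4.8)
The plan is to reduce feeble continuity to ordinary continuity on a large set, and then to extract a product of perfect sets from that set. I will use two classical facts: a function with the Baire property is continuous on some comeager $G_\delta$ set $R\subset\R^2$, and a measurable function is, by Lusin's theorem, continuous on a closed set $K$ of positive measure. For $E\subset\R^2$ write $E_x=\{t:\la x,t\ra\in E\}$ and $E^y=\{s:\la s,y\ra\in E\}$ for the vertical and horizontal sections.

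The core observation is a sufficiency criterion that exploits the fact that the sequences in the definition of feeble continuity range over all of $\R$, not over any prescribed set. Suppose $f|_R$ is continuous and $\la x,y\ra\in R$. If there are $x_n\searrow x$ with $\la x_n,y\ra\in R$ and $y_m\searrow y$ with $\la x_n,y_m\ra\in R$ for all $n,m$, then for fixed $n$ continuity of $f|_R$ along $\la x_n,y_m\ra\to\la x_n,y\ra$ gives $\lim_m f(x_n,y_m)=f(x_n,y)$, and continuity along $\la x_n,y\ra\to\la x,y\ra$ gives $\lim_n f(x_n,y)=f(x,y)$; hence $\la x,y\ra\in FC(f)$. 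Such sequences exist as soon as $R^y$ is comeager: then $R^y$ and $\{s:R_s\text{ is comeager}\}$ (the latter comeager by the Kuratowski--Ulam theorem) meet in a comeager, hence dense, set that accumulates at $x$ from the right, producing the $x_n$ inside $R^y$ with each $R_{x_n}$ comeager; and $\bigcap_n R_{x_n}$ is then comeager and accumulates at $y$ from the right, producing the $y_m$. Therefore $W:=\{\la x,y\ra\in R:R^y\text{ is comeager}\}\subset FC(f)$, and $W$ is comeager by Kuratowski--Ulam; in particular $FC(f)$ is residual.

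For assertion (1) I fix a nonmeager $B$ with the Baire property. Then $B\cap W$ is nonmeager with the Baire property, hence comeager in some box $B_0$. A fusion argument now yields perfect sets $P,Q$ with $P\times Q\subset B\cap W$: I build a tree of subboxes, and at level $n$ I refine each current box to avoid the $n$-th set in a decomposition of $B_0\setminus W$ into nowhere dense sets, which is possible by shrinking the finitely many horizontal and vertical edges pair by pair (shrinking only improves avoidance). Since feeble continuity has already been verified on all of $W$, I need no additional structure on $P,Q$, and $P\times Q\subset FC(f)\cap B$.

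Assertion (2) runs along the same lines with category replaced by measure and $R$ by the Lusin set $K$, using Lebesgue density and Fubini in place of Kuratowski--Ulam, and an Eggleston-type fusion to pull a product of perfect sets out of the positive measure set. The main obstacle is exactly where the two cases diverge: the criterion again requires $x_n\searrow x$ in $K^y$ and a single sequence $y_m\searrow y$ lying in the countable intersection $\bigcap_n K_{x_n}$, but positive density of $y$ in each $K_{x_n}$ is not preserved under countable intersections, and one must also arrange that the $x_n$, chosen to approach $x$ within $K^y$, give sections $K_{x_n}$ for which $y$ is a right-density point. I would handle this by choosing the $x_n$ inductively so that the density deficiencies of $y$ in the $K_{x_n}$ are summably small at a common scale, so that every finite, and hence the full countable, intersection still meets each right-neighbourhood of $y$. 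This produces a full measure subset of $K$ inside $FC(f)$, whence $P\times Q\subset FC(f)\cap B$. Finally, since every measurable set of positive measure then meets $FC(f)$, the complement of $FC(f)$ contains no set of positive measure, so $FC(f)$ has full outer Lebesgue measure.
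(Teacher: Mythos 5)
Your part (1) is correct, but it proceeds in the opposite order from the paper, and the reversal is a genuine (and in one respect advantageous) difference. The paper extracts the product first: it applies Galvin's theorem to $H\cap B$ (where $H$ is residual and $f|H$ is continuous) to get Cantor sets $P,Q$ with $P\times Q\subset H\cap B$, and only then verifies feeble continuity at points of $P\times Q$, taking the sequences $x_n$, $y_m$ inside $P$ and $Q$; this is why it must discard the countably many right-isolated points of $P$ and $Q$. You verify feeble continuity first, on the comeager set $W=\{\la x,y\ra\in R\colon R^y \mbox{ comeager}\}$, by a double Kuratowski--Ulam argument, and only afterwards pull a perfect product out of $B\cap W$ by a fusion that is essentially the standard proof of the Mycielski--Galvin theorem the paper cites. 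This order buys two things: the perfect sets need no pruning, since feeble continuity is already known at every point of $W$; and residuality of $FC(f)$ is immediate from $W\subset FC(f)$, whereas the paper deduces residuality from its intersection statement, which is delicate because the nonmeager set $\R^2\setminus FC(f)$ need not contain a nonmeager subset with the Baire property.

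Part (2), however, has a genuine gap at exactly the step you flag. The repair you propose --- choose the $x_n$ so that the density deficiencies of $y$ in the sections $K_{x_n}$ are summably small at a common scale $h_0$ --- is asserted without proof, and in general no such choice exists: the scale at which $K_s$ achieves small deficiency at $y$ can collapse as $s\searrow x$. For instance, let $K$ contain the closed set $\{\la s,y+u\ra\colon x\le s\le x+1,\ 0\le u\le s-x\}$ together with $[x,x+1]\times(y+E)$, where $E=\bigcup_{k\ge 1}[2^{-k},3\cdot 2^{-k-1}]$ satisfies $|E\cap[0,h]|\le \frac{2}{3}h$ for all $h\le 1$. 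Every section $K_s$ with $s>x$ has right density $1$ at $y$ (it contains $[y,y+(s-x)]$), yet for any fixed $h_0$ the deficiency of $K_s$ at scale $h_0$ is at least $\frac{1}{3}-(s-x)/h_0\ge\frac{1}{6}$ once $s-x<h_0/6$; so along any $x_n\searrow x$ the deficiencies at any common scale are eventually bounded below, and your passage from finite to countable intersections has nothing to stand on. You also give no argument that the points of $K$ where such a choice is possible form a set of full measure, so the full-measure subset of $K$ inside $FC(f)$ is not actually produced.

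The gap is repairable, because the common scale was never needed: the iterated limit $\lim_n\lim_m f(x_n,y_m)$ depends only on tails in $m$, so it suffices that $y_m\in K_{x_n}$ for all $m\ge n$. Finite intersections $\bigcap_{n\le N}K_{x_n}$ of sets with right density $1$ at $y$ still have right density $1$ at $y$ (deficiencies add), so one can choose $y_m\in\bigcap_{n\le m}K_{x_n}\cap\bigl(y,\min(y_{m-1},y+1/m)\bigr)$ recursively; in the example above this is exactly what succeeds. With this observation, Fubini and the Lebesgue density theorem show that the set of $\la x,y\ra\in K$ such that $y$ is a right density point of $K_x$ and $x$ is a right accumulation point of $\{s\colon \la s,y\ra\in K \mbox{ and } y \mbox{ is a right density point of } K_s\}$ has full measure in $K$ and lies in $FC(f)$, after which Brodskii's theorem finishes the argument as you intended. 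Note that the paper avoids this entire difficulty by keeping the order of part (1): it applies Brodskii's theorem to the Lusin set first, and inside the resulting product $P\times Q$ the membership $\la x_n,y_m\ra\in M$ holds for all $n,m$ automatically.
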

\begin{proof}
(1) Let $f\colon\R^2\to\R$ have the Baire property and let $B\subset\R^2$ be nonmeager with the Baire property. Pick a residual set $H\subset\R^2$ such that $f|H$ is continuous.
 By the Galvin theorem \cite[Theorem 19.6]{Ke}, pick Cantor-type sets $P,Q\subset\R$ such that $P\times Q\subset H\cap B$. 
Let $\la x,y\ra \in P\times Q$ be a point such that $x$ is a right-hand accumulation point of $P$ and $y$ is a right-hand accumulation point of $Q$. Pick $x_n\searrow x$ and $y_m\searrow y$ where
$x_n$'s are in $P$ and $y_m$'s are in $Q$. Since $f|H$ is continuous at 
each $\la x_n,y\ra$, we have $\lim_{m\to\infty}f(x_n,y_m)=f(x_n,y)$. Now, by the continuity of $f|H$ at 
$\la x,y\ra$, 
$$
\lim_{n\to\infty}\lim_{m\to\infty}f(x_n,y_m)=f(x,y)$$
which shows that $f$ is feebly continuous at $\la x,y\ra$. 
Let $P_0$ (respectively, $Q_0$) denote the set of all right-hand isolated points of $P$ (respectively, $Q$). Then $P_0$ and $Q_0$ are countable, so we can find perfect sets $P_1\subset P\setminus P_0$ and $Q_1\subset Q\setminus Q_0$.  
Hence $P_1\times Q_1\subset FC(f)\cap B$. Supposing that $\R^2\setminus FC(f)$ is nonmeager, we obtain a contradiction with 
our first assertion.

(2) The proof for measurable $f$ is similar. Let $B\subset\R^2$ be measurable of measure $\alpha\in(0,\infty)$. By the Luzin theorem, pick
a measurable set $M\subset B$ such that the measure of $B\setminus M$ is $<\alpha/2$ and $f|M$ is continuous.
Then by a theorem of Brodskii \cite{Br} (cf. also \cite{E}), pick Cantor-type sets $P,Q$ such that $P\times Q\subset M$.
The remaining argument works as above. To show the final assertion, suppose that $FC(f)$ is not of full outer measure.
Then $\R^2\setminus FC(f)$ has positive inner measure, hence it  contains a measurable set of  positive measure
and we obtain a contradiction.
\end{proof}

\begin{corollary}\label{Borel}
If $f\colon\R^2\to\R$ is a Borel function then the sets $FC(f)$ and $VFC(f)$ are analytic residual of full Lebesgue measure.
\end{corollary}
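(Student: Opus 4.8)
The plan is to treat the two conclusions separately: the largeness (residual, full outer measure) is immediate from the preceding results, while the analyticity requires exhibiting the two sets as projections of Borel sets, and it is this descriptive-complexity computation that carries the content.

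For largeness, I would first observe that a Borel $f$ both has the Baire property and is Lebesgue measurable. Applying Theorem~\ref{TT1}(1) with $B=\R^2$ shows that $FC(f)$ is residual, and Theorem~\ref{TT1}(2) shows that it has full outer Lebesgue measure; since $FC(f)\subset VFC(f)$, both properties pass to $VFC(f)$. At this stage only the \emph{outer} measure is controlled; promoting it to full measure will come for free once analyticity is in hand, because analytic sets are Lebesgue measurable.

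To prove that $FC(f)$ is analytic, I would realize it as a projection. Work in the Polish space $\R^2\times\R^\N\times\R^\N$ with a generic point $(x,y,(x_n)_n,(y_m)_m)$, and let $R$ be the set of such points satisfying: (a) $x_n\searrow x$; (b) $y_m\searrow y$; and (c) the iterated limit $\lim_n\lim_m f(x_n,y_m)$ exists and equals $f(x,y)$. Then $FC(f)$ is exactly the projection of $R$ onto the $\R^2$ factor, so it suffices to check that $R$ is Borel. Conditions (a) and (b) are Borel, since strict monotonicity is a countable intersection of open conditions and convergence to $x$ (respectively $y$) is the usual Borel Cauchy-type condition. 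For (c), since $f$ is Borel the map $(x,y,(x_n),(y_m))\mapsto f(x_n,y_m)$ is Borel for each fixed $n,m$; hence for each $n$ the set where $\lim_m f(x_n,y_m)$ exists is Borel (Cauchy criterion), and on it the value $a_n$ is a Borel function of the variables. Requiring this for all $n$ is a countable intersection, and the final clause $\lim_n a_n=f(x,y)$ --- where $f(x,y)$ is itself a Borel function of $(x,y)$ --- is Borel by one more application of the same limit fact. Thus $R$ is Borel and $FC(f)$ is analytic.

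The set $VFC(f)$ is handled identically, except that the second sequence is replaced by a double sequence $(y^{(n)}_m)_{n,m}\in\R^{\N\times\N}$, as in $(\ref{eq3})$: one requires $y^{(n)}_m\searrow y$ for each $n$ and $\lim_n\lim_m f(x_n,y^{(n)}_m)=f(x,y)$. These remain Borel conditions on $\R^2\times\R^\N\times\R^{\N\times\N}$, so $VFC(f)$ is again the projection of a Borel set and hence analytic. Finally, since analytic sets are Lebesgue measurable, the full outer measure established above becomes full measure for both sets, while residuality was already noted; this yields the corollary. I expect the only real obstacle to be the verification that clause (c) defines a Borel set: this is a twofold use of the classical fact that the domain of convergence of a Borel-parametrized sequence is Borel with Borel limit there, together with some care in bookkeeping the ``exists and equals $f(x,y)$'' requirement so that no genuine analytic complexity appears before the single final projection.
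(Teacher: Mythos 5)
Your proof is correct, and it splits into a part that matches the paper and a part that genuinely differs. For $FC(f)$ your argument is essentially the paper's: the paper also exhibits $FC(f)$ as the projection of a Borel set, working in $\R^2\times(c_0^-)^2\times\R^\N$ with an explicit auxiliary sequence $\alpha$ recording the inner limits, whereas you eliminate $\alpha$ by the standard fact that the domain of convergence of a Borel-parametrized sequence is Borel with Borel limit there; the two devices are interchangeable. For $VFC(f)$, however, the paper does \emph{not} use sequences at all: it invokes the characterization of Lemma~\ref{le1}, which describes $\R^2\setminus VFC(f)$ by a formula whose quantifiers over reals are all universal (the interval endpoints, the bound $t$ and the radii $r_s$ being taken rational), so the complement is coanalytic and $VFC(f)$ analytic; you instead project a Borel subset of $\R^2\times\R^\N\times\R^{\N\times\N}$ built from the double sequences $(y^{(n)}_m)$ of formula (\ref{eq3}). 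Both give $\Sigma^1_1$; your route is more uniform (one scheme covers $FC$, $VFC$, and would cover $TFC$ as well) and needs no combinatorial lemma, while the paper's route avoids the double-sequence bookkeeping and the limit-existence analysis by working with the sequence-free criterion. Finally, you reverse the paper's order of deduction for largeness: the paper gets analyticity first, concludes that both sets are measurable with the Baire property, and then applies Theorem~\ref{TT1}, while you apply Theorem~\ref{TT1} first (a Borel function has the Baire property and is measurable, so $FC(f)$, and hence its superset $VFC(f)$, is residual of full \emph{outer} measure) and then use measurability of analytic sets to promote outer measure to measure; this is logically equivalent and equally valid.
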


\begin{proof}
Let $f$ be Borel measurable. By Lemma~\ref{le1}, a point $\la x,y\ra$ is in $\R^2\setminus VFC(f)$ if and only if
\begin{eqnarray*}
(\exists p,q\in\Q;\, p<q) (\la x,y\ra\in f^{-1}[(p,q)]\,\land\,\\
\exists t\in\Q^+\,\forall s\in(0,t)\, \exists r_s\in\Q^+\, \forall v\in (0,r_s)\; \langle x+s,y+v\rangle\in f^{-1}[\R\setminus (p,q)].
\end{eqnarray*}
Hence it follows that $\R^2\setminus VFC(f)$ is coanalytic (cf. \cite[Section 4.1]{S}). Thus $VFC(f)$ is analytic.

The argument for $FC(f)$ is different. We simply use the definition of $\la x,y\ra\in FC(f)$.
Note that the set $c_0^-$ of strictly decreasing sequences in the separable Banach space $c_0$ is
of type $G_\delta$, so it forms a Polish space. The statement $\la x,y\ra\in FC(f)$ is equivalent to the formula
$$
\exists (x_n)\in c_0^-\; \exists (y_m)\in c_0^-\; \exists\alpha\in\R^\N\;\;\Phi(x,y,(x_n),(y_m),\alpha)
$$
where $\Phi(x,y,(x_n),(y_m),\alpha)$, given by
$$
\left(\forall n\in\N\; \lim_{m\to\infty}f(x+x_n,y+y_m)=\alpha(n)\right)\,\land\,\lim_{n\to\infty}\alpha(n)=f(x,y),
$$
describes a Borel subset of $\R^2\times(c_0^-)^2\times\R^\N$.
Hence $FC(f)$ is analytic. 

Consequently, the sets $FC(f)$ and $VFC(f)$ are measurable with the Baire property. Now, by Theorem~\ref{TT1}, the set $FC(f)$ is residual of full measure, hence its superset $VFC(f)$ has the same properties.
\end{proof}

\begin{example} \label{E3}
In general, sets $FC(f)$ and $VFC(f)$ may be without the Baire property or nonmeasurable. Let $A\subset \R^2$ be a set that intersects every $G_\delta$ nonmeager plane set, and such that no three points of $A$ are colinear (cf. \cite[Theorem 15.5]{O}).  Consider $f:=\chi_A$. Then $VFC(f)=\R^2\setminus A$.
Indeed, consider $\la x,y\ra\in\R^2$, a sequence $x_n\searrow x$ and, for each $n\in\N$, a sequence $y_m^{(n)}\searrow y$.
For each $n$, we have $\lim_m f(x_n,y^{(n)}_m)=0$ since $|\{y\colon f(x_n,y)=1\}|\leq 2$. Then 
$$
\lim_{n\to\infty}\lim_{m\to\infty}f(x_n,y_m^{(n)})=0.$$
 Hence this limit is equal $f(x,y)$ if and only if $f(x,y)=0$ which is equivalent to $\la x,y\ra\notin A$.
 The same argument shows that $FC(f)=\R^2\setminus A$.
 It follows that $A$ does not possess the Baire property. Thus $\R^2\setminus A$ is neither analytic nor residual.
 Similarly, we can use nonmeasurable set $A\subset \R^2$ that intersects every closed plane set of positive measure, and such that no three points of $A$ are colinear (cf. \cite[Theorem 14.4]{O}). Then $f:=\chi_A$ is nonmeasurable and
 $FC(f)=VFC(f)=\R^2\setminus A$.
\end{example}

It is natural to ask how much the sets $VFC(f)$ and $FC(f)$ can differ. Let us start with a preliminary example.

\begin{example}\label{EX1}
We present a Baire 1 function $f\colon\R^2\to\R$ with $VFC(f)\setminus FC(f)=\{\la 0,0\ra\}$. 
Choose sequences of reals $(y^{(n)}_m)_m$, for $n\in\mathbb{N}$, with the following properties:
\begin{itemize}
\item
$y^{(n)}_m\searrow 0$ for every $n\in\mathbb{N}$;
\item
$y^{(n)}_m\ne y^{(n')}_{m'}$ for $\la n,m\ra\ne \la n',m'\ra$.
\end{itemize}
Fix a sequence $x_n\searrow 0$.
Let $f:=\chi_A$ where
$$A:=\{ \la x_n,y^{(n)}_m\ra \colon n,m\in\mathbb{N}\}\cup\{\la 0,0\ra\},$$ 
First, observe that $f$ is discontinuous only at points from the countable set
 $A\cup\{ \la x_n,0\ra\colon n\in\mathbb{N}\}$, hence it is of the first Baire class. 
Next, notice that $\lim_m f(x_n,y^{(n)}_m)=1$ for every 
$n\in\mathbb{N}$, thus $\lim_n\lim_m f(x_n,y^{(n)}_m)=1=f(0,0)$, so $f$ is very feebly continuous at the point $\la 0,0\ra$. 

Now, suppose that 
$\la 0,0\ra\in FC(f)$. Then there are sequences $s_n\searrow 0$, $t_m\searrow 0$ with 
$$\lim_{n\to\infty}\lim_{m\to\infty} f(s_n,t_m)=f(0,0)=1.$$
 Clearly, $(t_m)$ must be a subsequence of all sequences $(y^{(n)}_m)_m$ (for $n\in\N$), which is impossible.

Finally, fix $\la x,y\ra\ne\la 0,0\ra$. If $\la x,y\ra\not\in A\cup\{ \la x_n,0\ra\colon n\in\mathbb{N}\}$ then $f$ is continuous at the point $\la x,y\ra$, and therefore, $\la x,y\ra\in FC(f)$. If $\la x,y\ra=\la x_n,y^{(n)}_m\ra$ then $f(x,y)=1$ and there is an open neigbourhood $U$ of $\la x,y\ra$ such that $f(v,w)=0$ for any $\la v,w\ra\in U\setminus\{\la x,y\ra\}$. Hence $\la x,y\ra\not\in VFC(f)$. If $\la x,y\ra=\la x_k,0\ra$ for some $k\in\mathbb{N}$, then $f(x,y)=0$ and there is a sequence $v_n\searrow x_k$ such that $f(v_n,y)=0$ for any $n\in\N$. Clearly, this implies that $\la x,y\ra\in FC(f)$.
\end{example}

\begin{remark}
The above example cannot be improved to obtain a function $f$ which is continuous at each point of an open neighbourhood $G$ of $\la 0,0\ra$,
distinct from $\la 0,0\ra$. Indeed, in this case,
$\la 0,0\ra\in VFC(f)$ implies that $\la 0,0\ra\in FC(f)$.
To show it, assume that $G$ is an open box $I\times J$ and let $\la 0,0\ra\in VFC(f)$. 
Pick a sequence $x_n\searrow 0$ and, for each $n$, a sequence $y^{(n)}_m\searrow 0$ such that $x_n\in I$, 
$y^{(n)}_m\in J$ (for all $m,n\in\N$), and $\lim_n\lim_m f(x_n,y^{(n)}_m)=f(0,0)$. Fix any sequence $y_m\searrow 0$ with $y_m\in J$. For every $n\in\mathbb{N}$, since $f$ is continuous at $\la x_n,0\ra$, we have $\lim_m f(x_n,y_m)=\lim_m  f(x_n,y^{(n)}_m)$. Thus $\lim_n\lim_m f(x_n,y_m)=\lim_n\lim_m f(x_n,y^{(n)}_m)=f(0,0)$, and so, $\la 0,0\ra\in FC(f)$.
\end{remark}

We use an idea from Example \ref{EX1} to obtain more general results.

\begin{theorem} \label{TN}
\begin{enumerate}
\item
For every countable set $A\subset\R^2$, there exists a Baire 2 function $f\colon\R^2\to\R$ such that $A\subset VFC(f)\setminus FC(f)$.
\item
There is a Baire 1 function $f\colon\R^2\to\R$ for which the set $VFC(f)\setminus FC(f)$ is perfect.
\item
There exists a Baire 2 function $f\colon\R^2\to\R$ for which the set $VFC(f)\setminus FC(f)$ is $\co$-dense in $\R^2$
(that is, its intersection with every nonempty open set is of size $\co$).
\end{enumerate}
\end{theorem}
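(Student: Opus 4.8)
The plan is to realize all three statements with characteristic functions $f=\chi_B$, pushing the mechanism of Example~\ref{EX1} as far as it will go. For a prospective ``bad'' point $z=\la p,c\ra$ I will set $f(z)=1$ and attach to $z$ a countable \emph{witness set}: a sequence of columns $x_n\searrow p$ together with, in each column, a sequence of heights $\searrow c$ at which $f=1$. The column-dependent inner sequences then witness (\ref{eq3}), so $z\in VFC(f)$. To simultaneously destroy feeble continuity at $z$ I will keep the height-sets of distinct columns \emph{pairwise disjoint}. Indeed, a putative feeble witness at $z$ supplies a \emph{single} inner sequence $(t_m)\searrow c$ that must force $\lim_m f(s_n,t_m)=1$ for cofinally many outer terms $s_n$; but $\lim_m f(s_n,t_m)=1$ requires $(t_m)$ to be eventually contained in one column's height-set, and by disjointness a single $(t_m)$ lands in at most one such set, hence serves at most one $s_n$ --- a contradiction. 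For (1), given a countable $A$, I attach a witness set to every $a\in A$ and choose \emph{all} heights (over all points of $A$ and all columns) distinct; isolated points of $A$ contribute only finitely many $m$ to any column and so cannot repair the argument. Thus $A\subseteq VFC(f)\setminus FC(f)$, and since the resulting $B$ is countable, hence $F_\sigma$, the function $\chi_B$ is of Baire class $2$.

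For (2) I take $E=P\times\{0\}$ with $P\subseteq\R$ a Cantor set, and let the columns be points lying in the complementary gaps of $P$, chosen so that they accumulate from the right at every point of $P$ while satisfying $\{x_n\}\cap P=\emptyset$ and $\overline{\{x_n\}}\setminus\{x_n\}\subseteq P$; in the $n$th column I place heights $\searrow 0$, all globally distinct and bounded by $1/n$, so that the witness points have no limit points off the $x$-axis. The general mechanism puts every $\la p,0\ra\in E$ into $VFC(f)\setminus FC(f)$, and a short case check (witness points are isolated in $B$, hence not in $VFC(f)$; column feet $\la x_n,0\ra$ and points off $\bar B$ are feebly continuous or continuous) shows $VFC(f)\setminus FC(f)=E$, which is perfect. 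The delicate point is Baire class $1$: \emph{a priori} $\bar B\supseteq P\times\{0\}$, so $B$ need not be $G_\delta$. The key observation is that $E$ \emph{absorbs} this perfect limit set: one computes $\bar B\setminus B=\{x_n\}\times\{0\}$, which is \emph{countable}. Hence $B=\bar B\setminus(\{x_n\}\times\{0\})$ is $G_\delta$ (a closed set minus a countable $F_\sigma$ set) as well as $F_\sigma$, so $B\in\bDelta^0_2$ and $\chi_B$ is of Baire class $1$.

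For (3) I spread the construction of (2) over a countable base. Enumerating the open boxes with rational vertices as $(B_k)$, I place in each $B_k$ a horizontal Cantor set $E_k=P_k\times\{c_k\}$ of size $\co$, with its own witness columns and heights confined to $B_k$, arranging the $x$-projections $P_k$ to be \emph{pairwise disjoint} and all heights (across all $k$) distinct. Disjoint projections prevent a single outer column $s_n$ from meeting two different $E_k$ (so the $E_k$ contribute only finitely many $m$ to any column), while global distinctness of heights preserves the no-$FC$ argument verbatim; hence each $E_k\subseteq VFC(f)\setminus FC(f)$ despite the overlapping geometry. Since every nonempty open set contains some $B_k$ and $|E_k|=\co$, the set $\bigcup_k E_k\subseteq VFC(f)\setminus FC(f)$ meets every nonempty open set in $\co$ points, so $VFC(f)\setminus FC(f)$ is $\co$-dense. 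As $B$ is a countable union of Cantor sets together with a countable set, it is $F_\sigma$, so $\chi_B$ is of Baire class $2$.

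The positive (very feeble) direction and the verification that stray points do not spoil the target set are routine. I expect the real work to be the descriptive-set-theoretic bookkeeping. In (2) the Baire~$1$ conclusion rests entirely on the identity $\bar B\setminus B=\{x_n\}\times\{0\}$, which is exactly why the columns must be chosen off $P$ with all their limit points inside $P$; any dense or careless choice of columns would put a perfect set into $\bar B\setminus B$ and reduce $\chi_B$ to Baire class $2$. In (3) the obstacle is ruling out interference among the countably many constructions, and this is precisely what the pairwise-disjoint projections together with globally distinct heights are engineered to prevent.
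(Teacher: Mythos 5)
Your plan to handle all three parts with characteristic functions breaks down exactly in part (1), where $A$ is an \emph{arbitrary} countable set. Take $A=\Q^2$, or even $A=\{\la 1/n,1/m\ra\colon n,m\in\N\}\cup\{\la 0,0\ra\}$. Your construction must put $A\subseteq B$ (indeed, if $f(a)=0$ for some $a=\la p,c\ra\in A$, then since $B$ is countable you can pick $s_n\searrow p$ and then $t_m\searrow c$ avoiding the countable set $\{t\colon \exists n\ \la s_n,t\ra\in B\}$, giving $\lim_n\lim_m f(s_n,t_m)=0=f(a)$, so $a\in FC(f)$). But once $A\subseteq B$, at any $\la p,c\ra\in\Q^2$ choose rationals $s_n\searrow p$ and $t_m\searrow c$: every grid point $\la s_n,t_m\ra$ lies in $A\subseteq B$, hence $\lim_n\lim_m f(s_n,t_m)=1=f(p,c)$ and $\la p,c\ra\in FC(f)$, contradicting $A\subseteq VFC(f)\setminus FC(f)$. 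Your only defense --- ``isolated points of $A$ contribute only finitely many $m$ to any column'' --- is false here: a single vertical line can meet $A$ in an infinite (even dense) set, and no choice of witness heights can help, because the offending value-$1$ points are the given points of $A$ itself, not your witnesses. This is precisely why the paper abandons characteristic functions in part (1): it takes $f=\sum_k k\chi_{A_k}$, tagging the $k$-th target point and its witness set $A_k$ with the distinct value $k$. Since a convergent sequence of nonnegative integers is eventually constant, any feeble-continuity witness at $\la a_k,b_k\ra$ must eventually take the value $k$, i.e.\ lie in $A_k$, and then the disjoint-heights contradiction applies; interference from other points of $A$, however they accumulate, is impossible. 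Your part (1) needs this (or some equivalent) value-separation device and is not correct as written.

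Parts (2) and (3) are fine. Part (2) is essentially the paper's own construction (a Cantor set on a horizontal line, witness columns in the complementary gaps), and it escapes the objection above because the target set is horizontal: points of $E$ cannot serve as inner witnesses since the inner sequences must decrease strictly to the common height. Your explicit bookkeeping that $\overline{B}\setminus B$ is countable, forced by bounding the heights in the $n$-th column by $1/n$, is a point the paper glosses over but genuinely needs for the Baire~1 conclusion. In part (3) you diverge from the paper in an interesting way: the paper again uses distinct integer weights ($f=\sum_k kf_k$) to prevent interference between the countably many constructions, whereas you keep a characteristic function and prevent interference geometrically, via pairwise disjoint projections $P_k$ and globally distinct witness heights. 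That does work: for a fixed outer term $s_n$, target points contribute at most one index $m$ (as $s_n$ lies in at most one $P_j$), so the inner sequence is eventually made of witness points of the single column $s_n$, and distinct columns have disjoint height sets. So (2) and (3) stand; only (1) requires repair.
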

\begin{proof}
(1) Assume that $A$ is countable infinite and let $\la a_k,b_k\ra$ for $k\in\mathbb{N}$ be a one-to-one enumeration of points in $A$. For every $k$, choose a sequence $x^k_n\searrow a_k$ such that the sets $\{ x^k_n\colon n\in\mathbb{N}\}$ (for $k\in\N$) are pairwise disjoint. For every pair 
$\la k,n\ra\in\mathbb{N}^2$, choose a sequence $(y^{(k,n)}_m)_m$ of reals such that $y^{(k,n)}_m\searrow 0$ for all $\la k,n\ra$, and sets of terms of all sequences
$(y^{(k,n)}_m)_m$ are pairwise disjoint, cf. the construction from  Example~\ref{EX1}. For every $k$, let 
$$A_k:=\{ \la x_n^k,y^{(k,n)}_m\ra \colon n,m\in\mathbb{N}\}\cup\{\la a_k,b_k\ra\}$$
 and  $f_k:=k\chi_{A_k}$.  Finally, let $f:=\sum_{k=1}^\infty f_k$. 
 
 First, notice that $f$ is well defined because, for every $\la x,y\ra\in\R^2$, we have $f_k(x,y)\ne 0$ for at most one $k$. Moreover, every $f_k$ has only countable many points of discontinuity, so it is Baire 1. Therefore, $f$ is Baire 2.
 Since $f_k$ is very feebly continuous at $\la a_k,b_k\ra$, $f$ has the same property. Finally, the proof that $f$ is feebly continuous at no point $\la a_k,b_k\ra$, is analogous to the argument in Example \ref{EX1}.
 
(2) Let $C\subset [0,1]$ denote the Cantor ternary set, and let $(I_k)$ be a one-to-one sequence of all components of 
$[0,\infty)\setminus C$ with $I_k=(a_k,b_k)$ for any $k\in\mathbb{N}$. For every $k$, choose a sequence $x^k_n\searrow a_k$ of points from $I_k$. Now, for $k,n,m\in\mathbb{N}$, pick a sequence $(y^{(k,n)}_m)_m$ such that
\begin{itemize}
\item
$y^{(k,n)}_m\searrow 0$ for every pair $\la k,n\ra$;
\item
sets of terms of $(y^{(k,n)}_m)_m$ are pairwise disjoint.
\end{itemize}
Let $A:=(C\times\{ 0\})\cup \{ \la x^k_n,y_m^{(k,n)}\ra\colon k,n,m\in\mathbb{N}\}$, and  $f:=\chi_A$.
Similarly as in Example \ref{EX1}, one can check that $f$ is Baire 1, $C\times\{ 0\}\subset VFC(f)$, and $f$ is feebly continuous at no point of $C\times\{ 0\}$.

(3) Let $\{ I_k\times J_k\colon k\in\mathbb{N}\}$ be a countable basis of $\R^2$. Let $(C_k)$ be a sequence of pairwise disjoint Cantor sets with $C_k\subset I_k$, and let
 $(d_k)$ be a one-to-one sequence with $d_k\in J_k$ for $k\in\N$. For every $k$, let $f_k\colon\R^2\to \{0,1\}$ be a function constructed as  in (2) with $VFC(f_k)\setminus FC(f_k)=C_k\times\{d_k\}$. Then the function $f:=\sum_{k=1}^\infty kf_k$ is as we need.
\end{proof}

\section{Two-feebly continuity}
In a natural way, one can introduce the notion of reverse very feebly continuity which is different from very feebly
continuity since Example~\ref{exa1} again works. 
Evidently, if a function $f\colon\R^2\to\R$ is (very) feebly continuous and reverse (very) feebly continuous at $\la x,y\ra$, then it has the same
property at $\la y,x\ra$.
We propose another related notion which is stable with respect to the change of order $\la x,y\ra\mapsto \la y,x\ra$. 
We will say that $f\colon\R^2\to\R$ is {\em two-feebly continuous at} $\la x,y\ra$ if there exist sequences $x_n\searrow x$ and $y_n\searrow y$ such that $\lim_n f(x_n,y_n)=f(x,y)$. 
Let us compare the strength of these notions.

\begin{proposition}
If a function $f\colon\R^2\to\R$ is (reverse) very feebly continuous at $\la x,y\ra$ then it is two-feebly continuous at $\la x,y\ra$. The converse need not hold.
\end{proposition}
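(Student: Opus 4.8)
The plan is to prove the forward implication by diagonalizing the iterated double sequence, and to refute the converse with an explicit characteristic function that is two-feebly continuous at $\la 0,0\ra$ but neither very nor reverse very feebly continuous there. The geometric idea behind the counterexample is that the witnessing region $G(z)$ in Lemma~\ref{le1} may be taken arbitrarily thin, so it can be made to ``hug'' the positive $x$-axis (and, by symmetry, the positive $y$-axis), while a diagonal sequence escapes such a thin region.

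For the implication, suppose $f$ is very feebly continuous at $\la x,y\ra$, witnessed by $x_n\searrow x$ and, for each $n$, by $y^{(n)}_m\searrow y$, with $\alpha_n:=\lim_m f(x_n,y^{(n)}_m)$ satisfying $\alpha_n\to f(x,y)$. First I would, for each $n$, pick an index $m_n$ so large that simultaneously $|f(x_n,y^{(n)}_{m_n})-\alpha_n|<1/n$ and $y<y^{(n)}_{m_n}<y+1/n$; this is possible because $y^{(n)}_m\searrow y$. Setting $u_n:=x_n$ and $w_n:=y^{(n)}_{m_n}$ yields $u_n\searrow x$, $w_n\to y$ from above, and $f(u_n,w_n)\to f(x,y)$ by the triangle inequality. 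Since $(w_n)$ need not be monotone, I would then pass to a strictly decreasing subsequence $(w_{n_k})$, which exists because every term exceeds $y$ while $w_n\to y$; the corresponding $(u_{n_k})$ stays strictly decreasing, and $f(u_{n_k},w_{n_k})\to f(x,y)$. Thus $f$ is two-feebly continuous at $\la x,y\ra$. The reverse very feebly continuous case runs through the same diagonalization with the roles of the two coordinates interchanged.

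For the failure of the converse I would take $f:=\chi_A$ with
$$A:=\{\la a,b\ra : a,b>0,\ b<a^2\}\cup\{\la a,b\ra : a,b>0,\ a<b^2\},$$
so that $f(0,0)=0$. Two-feeble continuity at $\la 0,0\ra$ follows from the diagonal sequence $x_n=y_n=1/n$: since $1/n>1/n^2$, the points $\la 1/n,1/n\ra$ avoid $A$, hence $f(x_n,y_n)=0=f(0,0)$. To see $\la 0,0\ra\notin VFC(f)$ I would invoke Lemma~\ref{le1} with $(p,q)=(-1/2,1/2)$, $t=1$ and $r_a=a^2$: on the resulting set $G=\{\la a,b\ra:0<a<1,\ 0<b<a^2\}$ one has $f\equiv 1$, so $f$ omits every value of $(p,q)$ there. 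Swapping the two coordinates (the symmetric form of Lemma~\ref{le1}, witnessed by $\{\la a,b\ra:0<b<1,\ 0<a<b^2\}$) shows that $\la 0,0\ra$ is not reverse very feebly continuous either.

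The main obstacle is not the construction but the monotonicity bookkeeping in the forward direction: the diagonal choice $w_n=y^{(n)}_{m_n}$ produces a sequence converging to $y$ that is a priori not decreasing, whereas two-feeble continuity demands strictly decreasing sequences in both coordinates. I expect the extraction of a strictly decreasing subsequence (using that all $w_n>y$ while $w_n\to y$) to be the only genuinely delicate point; everything else reduces to a triangle-inequality estimate and the routine verification, via Lemma~\ref{le1}, that the prescribed thin regions witness the failure of (reverse) very feeble continuity.
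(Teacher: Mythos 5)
Your proof is correct, and its two halves compare differently with the paper's own. The forward implication is essentially the paper's argument: a diagonal choice of one term from each sequence $(y^{(n)}_m)_m$ with $|f(x_n,y^{(n)}_{m_n})-\alpha_n|<1/n$; your extra requirement $y<y^{(n)}_{m_n}<y+1/n$ followed by extraction of a strictly decreasing subsequence (possible since all terms exceed $y$ while the sequence tends to $y$) is just a more explicit version of the convergence-and-monotonicity bookkeeping that the paper compresses into the phrase ``choose inductively'' -- if anything, your version is the more airtight write-up, since the paper's three displayed conditions alone do not literally force $y_{k_n}\to y$. The counterexample, however, is genuinely different. The paper takes $f=\chi_A$ with $A=\{\la 1/n,1/n\ra\colon n\in\N\}\cup\{\la 0,0\ra\}$, so that $f(0,0)=1$: two-feeble continuity is witnessed by the diagonal lying \emph{inside} $A$, and (reverse) very feeble continuity fails because every vertical (horizontal) section of $A$ contains at most one point, so all iterated limits equal $0\ne 1$ -- a one-line counting observation needing no lemma. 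You instead place the value $1$ on two open parabolic regions and keep $f(0,0)=0$: the diagonal \emph{avoids} $A$, and very feeble continuity fails via Lemma \ref{le1} with $t=1$, $r_a=a^2$, since $f\equiv 1$ on the thin region $G$; this directly exhibits the geometric content of Lemma \ref{le1} (a region hugging the axis on which the value $f(0,0)$ is omitted). Both examples are sound. The only point to flag is that the ``symmetric form of Lemma \ref{le1}'' you invoke for reverse very feeble continuity is never stated in the paper; it does hold by interchanging coordinates, and in your example the direct check (for fixed $y_m>0$ one has $f(x,y_m)=1$ whenever $0<x<y_m^2$, so every inner limit is $1$) is immediate in any case.
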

\begin{proof}
Pick a sequence $x_n\searrow x$ and, for each $n\in\N$, a sequence $y_m^{(n)}\searrow y$, witnessing that $f$ is very feebly continuous at $\la x,y\ra$.
For every $n$, let $\alpha_n:=\lim_m f(x_n,y_m^{(n)})$. Then choose inductively consecutive terms of a sequence $(y_{k_n})$
in such a way that for each $n$, 
\begin{itemize}
\item $y_{k_n}$ is taken from $\{ y_m^{(n)}\colon m\in\N\}$;
\item $k_n<k_{n+1}$;
\item $|f(x_n,y_{k_n})-\alpha_n|<1/n$.
\end{itemize}
Since $\lim_n\alpha_n= f(x,y)$, we have $\lim_n f(x_n,y_{k_n})=f(x,y)$.
Consequently, $f$ is two-feebly continuous at $\la x,y\ra$.
Now, let $A:=\{ \la 1/n,1/n\ra\colon n\in\N\}\cup\{\la 0,0\ra\}$. Clearly, $f:=\chi_{A}$ is two-feebly continuous at $\la 0,0\ra$.
Fix a sequence $x_n\searrow 0$, and, for each $n\in\N$, a sequence $y_m^{(n)}\searrow 0$. 
For every $n$ we have $|\{ y\colon\la x_n,y\ra\in A\}|\leq 1$, so
$\lim_n\lim_m f(x_n,y_m^{(n)})=0\neq 1=f(0,0)$. Hence $f$ is not very feebly continuous at $\la 0,0\ra$.
Similarly, $f$ is not reverse very feebly continuous at $\la 0,0\ra$.
\end{proof}

For a function $f\colon\R^2\to\R$, let $TFC(f)$ denote the set of all points at which $f$ is two-feebly continuous.
\begin{theorem}
For every function $f\colon\R^2\to\R$, the set $TFC(f)$ is residual and has full Lebesgue measure.
If moreover $f$ is Borel, then the set $TFC(f)$ is analytic.
\end{theorem}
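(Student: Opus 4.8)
The plan is to describe the complement $\R^2\setminus TFC(f)$ as a countable union of sets each of which is simultaneously meager and Lebesgue null, giving residuality and full measure at once. First I would record the elementary characterization: $\la x,y\ra\notin TFC(f)$ if and only if there are $\varepsilon>0$ and $\delta>0$ with $|f(x',y')-f(x,y)|\ge\varepsilon$ for every $\la x',y'\ra$ in the open box $(x,x+\delta)\times(y,y+\delta)$. One direction is immediate; for the other, if no such $\varepsilon,\delta$ exist, then from boxes of side $1/k$ with tolerance $1/k$ one extracts a sequence in the open box along which $f\to f(x,y)$ and then passes to a subsequence making both coordinates strictly decreasing. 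Taking $\varepsilon=1/k$ and a rational $w$ with $|w-f(x,y)|<1/(3k)$, such a point lands in the level set $U_{w,k}:=f^{-1}\big((w-\tfrac{2}{3k},w+\tfrac{2}{3k})\big)$ while its whole box misses $U_{w,k}$. Hence $\R^2\setminus TFC(f)\subseteq\bigcup_{k\in\N}\bigcup_{w\in\Q}D(U_{w,k})$, where for an arbitrary $U\subseteq\R^2$ I set
$$D(U):=\{\la x,y\ra\in U\colon \exists\delta>0,\ ((x,x+\delta)\times(y,y+\delta))\cap U=\emptyset\}.$$

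The crux is then a lemma valid for \emph{any} $U\subseteq\R^2$: the set $D(U)$ is meager and null. To prove it, write $D(U)=\bigcup_n D_n$ with $D_n$ using the fixed width $1/n$, and tile the plane by half-open boxes $R$ of side $1/(2n)$. Inside one such box two points of $D_n$ can never have both coordinates strictly increasing, since then the later point would lie in the forbidden box of the earlier one yet belong to $U$. Thus $D_n\cap R$ is a ``non-increasing multifunction'': for $x_1<x_2$ every height over $x_1$ dominates every height over $x_2$, so the vertical sections occupy pairwise non-overlapping intervals inside the bounded box. Only countably many sections can have positive length --- these give countably many vertical segments, each null and nowhere dense --- while the remaining single-valued part is the graph of a monotone function, contained in its closure, which is a nowhere dense set of planar measure zero. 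Summing over the countably many $R$ and over $n$ shows $D(U)$ is meager and null, and feeding this back through the displayed inclusion yields that $TFC(f)$ is residual of full measure.

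For the descriptive complexity when $f$ is Borel, I would follow the template of Corollary~\ref{Borel}. Using the Polish space $c_0^-$ of strictly decreasing null sequences, the statement $\la x,y\ra\in TFC(f)$ is equivalent to
$$\exists (s_n)\in c_0^-\ \exists (t_n)\in c_0^-\colon\ \lim_{n\to\infty}f(x+s_n,y+t_n)=f(x,y).$$
For Borel $f$ the map $(x,y,(s_n),(t_n))\mapsto f(x+s_n,y+t_n)$ is Borel, so the limit condition describes a Borel subset of $\R^2\times(c_0^-)^2$; projecting along the Polish factor $(c_0^-)^2$ exhibits $TFC(f)$ as analytic.

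The main obstacle is the geometric lemma on $D(U)$. Everything hinges on extracting genuine structure from an otherwise arbitrary (possibly non-measurable) set $U$: the definition of $D(U)$ forbids strictly-increasing pairs at small scales, which forces the local antichain (monotone) shape. Once that shape is isolated, the passage ``monotone multifunction $\Rightarrow$ null and meager'' is routine, but care is needed to argue nullity and meagerness directly --- via the closure and Fubini applied to a monotone majorant --- rather than through Fubini on $U$ itself, since $U$ need not be measurable. The intermediate rational level $w$ is exactly what converts the value-oscillation condition defining $\R^2\setminus TFC(f)$ into the clean membership/avoidance condition defining $D(U)$.
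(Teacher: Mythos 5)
Your proof is correct, and its skeleton matches the paper's: both decompose $\R^2\setminus TFC(f)$ into countably many sets of the form ``points of a level set $U$ whose upper-right box of a fixed size misses $U$'' (your $D_n(U_{w,k})$ are exactly the paper's $A_{p,q,t}$ with $(p,q)=(w-\tfrac{2}{3k},w+\tfrac{2}{3k})$ and $t=1/n$), and both then show each piece is nowhere dense and null. Where you genuinely diverge is in how smallness is proved. The paper gives two short separate arguments: nowhere density is immediate (inside any box of side less than $t$ meeting $A_{p,q,t}$, the open part to the upper right of a point of $A_{p,q,t}$ misses $A_{p,q,t}$), and nullity comes from an outer-density argument (a square $B$ of side less than $t$ with $\lambda^\ast(A_{p,q,t}\cap B)>\tfrac34\lambda(B)$ contains a point of $A_{p,q,t}$ in its lower-left quarter, whose forbidden box is disjoint from $A_{p,q,t}$ yet covers more than $\tfrac14\lambda(B)$, a contradiction). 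You instead isolate a general geometric lemma --- for an \emph{arbitrary} $U\subseteq\R^2$, $D(U)$ is meager and null --- via the antichain observation: in tiles of side $1/(2n)$ no two points of $D_n$ can be in strictly increasing position, so $D_n$ restricted to a tile splits into countably many vertical segments plus the graph of a monotone function, whose closure is compact, null (Fubini on the closure, whose vertical sections are degenerate outside countably many jump abscissas), hence nowhere dense. Your route is heavier to set up but treats category and measure in one stroke and yields extra structure (a covering by countably many monotone curves); the paper's is shorter and more ad hoc, though equally valid for nonmeasurable $U$ since it only ever uses outer measure. For the descriptive part you also differ: the paper reads off coanalyticity of $\R^2\setminus TFC(f)$ from the box characterization, while you represent $TFC(f)$ directly as the projection of a Borel subset of $\R^2\times(c_0^-)^2$, in the style of the paper's own Corollary~\ref{Borel} for $FC(f)$; both arguments are sound.
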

\begin{proof}
Observe that $\la x,y\ra$ is in $\R^2\setminus TFC(f)$ if and only if
\begin{eqnarray*}
(\exists p,q\in\Q\,) \left(\la x,y\ra\in f^{-1}[(p,q)]\,\land\,
\exists t\in\Q^+\,\forall s\in(0,t)\, \forall v\in (0,t)\; \langle x+s,y+v\rangle\in f^{-1}[\R\setminus (p,q)]\right).
\end{eqnarray*}
Hence, if $f$ is Borel, the set $\R^2\setminus TFC(f)$ is co-analytic, so $TFC(f)$ is analytic.

For each triple $\la p,q,t\ra\in\Q\times \Q\times\Q^+$, define 
$$ A_{p,q,t}:=\left\{ \la x,y\ra\in f^{-1}[(p,q)]\colon (x,x+t)\times (y,y+t)\subset f^{-1}[\R\setminus (p,q)]\right\}.$$
Then we have 
$$\R^2\setminus TFC(f)=\bigcup_{p,q\in\Q}\bigcup_{t\in\Q^+}A_{p,q,t}.$$
Thus it is enough to verify that each set $A_{p,q,t}$ is nowhere dense and has measure zero.
Clearly, we can assume that $p<q$ and $A_{p,q,t}\ne\emptyset$. Fix $\la x,y\ra\in A_{p,q,t}$ and its open neighbourhood  $(a,b)\times (c,d)$. 
We may assume that $\max(b-a,d-c)<t$. Then the open set $(x,b)\times (y,d)$ is disjoint from $A_{p,q,t}$. This yields that $A_{p,q,t}$ is nowhere dense. 

Let $\lambda_\ast$ (respectively, $\lambda^\ast$) denote inner (outer) Lebesgue measure on $\R^2$. Now, suppose that $\lambda^\ast(A_{p,q,t})>0$. Then there is $\delta\in(0,t)$ such that 
$\lambda^\ast(A_{p,q,t}\cap B)>\frac{3}{4}\lambda(B)$
for an open square $B:=(a,a+\delta)\times (c,c+\delta)$. Consequently, there is $\la x,y\ra\in A_{p,q,t}\cap( (a,a+\frac{\delta}{2})\times (c,c+\frac{\delta}{2}))$. But then $A_{p,q,t}\cap ((x,x+t)\times (y,y+t))=\emptyset$ and $\lambda([(x,x+t)\times(y,y+t)]\cap B)>\frac{1}{4}\lambda(B)$. Therefore, $\lambda_\ast(B\setminus A_{p,q,t})>\frac{1}{4}\lambda(B)$, contrary to the choice of $B$.
\end{proof}

\begin{remark}
Let us remark that the set $\R^2\setminus TFC(f)$ may be of cardinality $\mathfrak{c}$, even for Baire 1 functions. Consider for instance, $f:=\chi_{\R\times\{ 0\}}$. Then $TFC(f)=\R^2\setminus(\R\times\{ 0\})$. In general, the set $TFC(f)$ may be non-analytic: let $f:=\chi_{B\times\{0\}}$ where $B\subset\R$ is not co-analytic.
\end{remark}

\section{Large algebraic structures within families of feebly continuous functions}
Since the first decade of this century, extensive investigations of large algebraic structures within various sets of exotic  functions or sequences
has been conducted by several reaserchers starting from V.~I.~Gurariy, R.~M.~Aron, J.~B.~Soane Sep\'ulveda and others.
In particular, the notions of lineability and algebraibility have been introduced and studied.
See the recent survey \cite{BPS} and monograph \cite{ABPS}. Let us recall basic definitions in this topic.
We will use them considering some families of feebly continuous functions.

Let $\kappa$ be a cardinal number. 
Let $L$ be a linear commutative algebra. We say that a set $A\subset L$ is {\em $\kappa$-algebrable} if $A\cup\{0\}$ contains a $\kappa$-generated algebra $B$. We say that $A\subset L$ is {\em strongly $\kappa$-algebrable} if 
$A\cup\{0\}$ contains a $\kappa$-generated algebra that is isomorphic to a free algebra 
(cf. \cite{BG}).
Note that $X=\{ x_\alpha\colon \alpha<\kappa\}$ is a set of generators of a free algebra 
contained in $A\cup\{0\}$ if and only if, for any $k\in\N$ any nonzero polynomial $P$  in $k$ variables, without constant term, and any distinct 
$y_1,\dots ,y_k\in X$, we have $P(y_1,\dots ,y_k)\neq 0$ and $P(y_1,\dots, y_k)\in A$.

Clearly, if $L$ is a linear commutative algebra $L$, then the $\kappa$-strong algebrability of $A\subset L$ implies its 
$\kappa$-algebrability, and this in turn implies the $\kappa$-linearity of $A$.  
We will consider various families connected with feebly continuity, contained in the linear commutative algebra $\R^{\R^2}$.

\begin{lemma}\label{lem:lineability}
For any function $f\colon\R\to\R$, let $\widetilde{f}\colon\R^2\to\R$ be defined by $\widetilde{f}(x,y):=f(x)$. Then $\widetilde{f}$ is feebly continuous if and only if $f$ has the same property.
\end{lemma}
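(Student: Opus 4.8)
The plan is to prove the equivalence pointwise, namely that for each point $\la x,y\ra\in\R^2$ the planar function $\widetilde{f}$ is feebly continuous at $\la x,y\ra$ if and only if $f$ is feebly continuous at $x$; the statement as phrased then follows by quantifying over all points. The whole argument rests on one trivial but decisive observation: since $\widetilde{f}(s,t)=f(s)$ does not depend on the second coordinate, for any sequences $x_n\searrow x$ and $y_m\searrow y$ the inner limit is constant in $m$, that is, $\lim_{m\to\infty}\widetilde{f}(x_n,y_m)=f(x_n)$ for every $n$. Consequently the double limit defining feeble continuity of $\widetilde{f}$ collapses to the single limit $\lim_{n\to\infty}f(x_n)$.

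For the forward direction I would assume $f$ is feebly continuous at $x$ and fix a witnessing sequence $x_n\searrow x$ with $f(x_n)\to f(x)$. Choosing any auxiliary sequence $y_m\searrow y$ (for instance $y_m:=y+1/m$), the observation above yields $\lim_{n\to\infty}\lim_{m\to\infty}\widetilde{f}(x_n,y_m)=\lim_{n\to\infty}f(x_n)=f(x)=\widetilde{f}(x,y)$, so $\widetilde{f}$ is feebly continuous at $\la x,y\ra$.

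For the converse I would assume $\widetilde{f}$ is feebly continuous at $\la x,y\ra$ and take sequences $x_n\searrow x$, $y_m\searrow y$ realizing the defining equality. Applying the same observation, the hypothesis $\lim_{n\to\infty}\lim_{m\to\infty}\widetilde{f}(x_n,y_m)=\widetilde{f}(x,y)$ reduces to $\lim_{n\to\infty}f(x_n)=f(x)$, and since $x_n\searrow x$ this exhibits $(x_n)$ as a sequence witnessing that $f$ is feebly continuous at $x$.

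I expect no genuine obstacle here: the only point requiring care is to record explicitly that $y$ and the choice of $(y_m)$ are entirely irrelevant, because $\widetilde{f}$ ignores the second coordinate and hence the planar double limit degenerates to a one-variable limit. The lemma is meant as a transfer tool allowing one to lift examples and algebraic structures from functions of one variable to the plane, and the same degeneracy argument applies verbatim in the global sense, which is how it will be used in the subsequent lineability and algebrability results.
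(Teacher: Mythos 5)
Your proof is correct and uses essentially the same argument as the paper: the inner limit collapses to $f(x_n)$ because $\widetilde{f}$ ignores the second coordinate, which settles both directions. The only cosmetic difference is that you establish the pointwise equivalence at every $\la x,y\ra$, whereas the paper's converse simply specializes to the points $\la x,0\ra$; the substance is identical.
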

\begin{proof}
Assume $f$ is feebly continuous. Fix $\la x,y\ra\in\R^2$. Since $f$ is feebly continuous at $x$, there is a sequence $x_n\searrow x$ with $\lim_nf(x_n)=f(x)$. Fix any sequence $y_m\searrow y$. Then $\lim_n\lim_m \widetilde{f}(x_n,y_m)=\lim_n f(x_n)=\widetilde{f}(x,y)$, so $\widetilde{f}$ is feebly continuous at the point $\la x,y\ra$.

Now, assume that $\widetilde{f}$ is feebly continuous. Fix $x\in\R$. Since $\widetilde{f}$ is feebly continuous at the point $\la x,0\ra$, there are sequences $x_n\searrow x$, $y_m\searrow 0$ with $\lim_n\lim_m\widetilde{f}(x_n,y_m)=\widetilde{f}(x,0)$. Then for each $n$, $\lim_m\widetilde{f}(x_n,y_m)=\lim_m f(x_n)=f(x_n)$ and $\widetilde{f}(x,0)=f(x)$, so $\lim_n f(x_n)=f(x)$, which means that $f$ is feebly continuous at $x$.
\end{proof}

We will denote by $\FC$ (respectively, $\VFC$, $\TFC$) the sets of all feebly continuous (respectively,
very feebly continuous, two-feebly continuous) functions $f\colon\R^2\to\R$.
Additionally, let $\FC(\R)$ stand for the family of all feebly continuous functions $f\colon\R\to\R$.
We know that $\FC\subset\VFC\subset\TFC$. We will show that the families $\FC$, $\VFC\setminus\FC$ and $\TFC\setminus\VFC$ are large from the algebraic viewpoint.

\begin{theorem}
The families $\FC$ and $\TFC\setminus\VFC$ are strongly $2^{\co}$-algebrable.
\end{theorem}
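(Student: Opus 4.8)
The plan is to prove both statements by exhibiting explicit families of generators and invoking the structural lemmas already established. The key tool is Lemma~\ref{lem:lineability}, which reduces feeble continuity of a function on $\R^2$ to feeble continuity of a function on $\R$. So for the family $\FC$, I would first build a large free algebra inside $\FC(\R)$ (functions of one variable) and then lift it to $\R^2$ via the map $f\mapsto\widetilde{f}$, noting that this lifting is an algebra homomorphism.

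For the construction in one variable, I would use a classical device for strong algebrability: take an injection $x\mapsto r_x$ from $\R$ into a linearly independent-over-$\Q$ set of positive reals (a Hamel-type choice), and for each such exponent consider generators of the form $g_\alpha(t)=\exp(r_\alpha\cdot h(t))$ for a suitable fixed feebly continuous (indeed badly discontinuous) base function $h$. The reason this yields a \emph{free} algebra is the standard exponential trick: any nonzero polynomial $P$ without constant term applied to distinct exponentials $e^{r_1 h},\dots,e^{r_k h}$ produces a generalized exponential sum $\sum_j c_j e^{s_j h}$ with distinct exponents $s_j$, which cannot vanish identically and, crucially, is nonzero precisely on the relevant point set. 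The target algebra has $2^{\co}$ generators because we index the exponents by an independent set of size $\co$, but to reach $2^{\co}$ one typically indexes by an almost disjoint family or by a family of subsets of a fixed countable independent set, giving $2^{\co}$ distinct admissible exponent functions. I would verify that $h$ can be chosen so that every such generalized exponential sum remains feebly continuous (at every point one produces a decreasing sequence realizing the required limit), which follows because feeble continuity is preserved by composition with continuous real functions applied along a single witnessing sequence.

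For the family $\TFC\setminus\VFC$, the situation is genuinely different because I must produce functions that \emph{are} two-feebly continuous everywhere relevant but \emph{fail} very feeble continuity, and this gap must survive taking arbitrary products and linear combinations. Here I would revisit the construction in the proof of the Proposition separating the notions, where $f=\chi_A$ with $A=\{\la 1/n,1/n\ra\colon n\in\N\}\cup\{\la 0,0\ra\}$ is two-feebly but not very feebly continuous at $\la 0,0\ra$. The plan is to spread out $2^{\co}$ such diagonal obstructions along disjoint diagonal segments indexed by an independent family, again combining them with the exponential generators so that on each relevant diagonal the algebra element behaves like a nonzero exponential sum (hence lands in $\TFC$ via the diagonal witnessing sequence), while the section-counting argument from the Proposition keeps it out of $\VFC$.

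The main obstacle, which I would address most carefully, is showing that the defect ($\TFC\setminus\VFC$ membership) is closed under the algebra operations, since being two-feebly-but-not-very-feebly continuous is not obviously preserved by addition or multiplication: I must guarantee simultaneously that every nonzero no-constant-term polynomial in the generators still \emph{attains} the correct limit along a diagonal (the easy direction, handled by the exponential-sum nonvanishing) and still \emph{fails} the iterated-limit condition of Lemma~\ref{le1} at the designated point. The latter requires that the value-set of any algebra element restricted to the relevant horizontal sections stay bounded away from the target value except at finitely many points, which I would secure by arranging the supports of the diagonal sequences to be sufficiently sparse and pairwise disjoint so that the $|\{y\colon\la x_n,y\ra\in A\}|\le 1$ estimate from the Proposition persists after forming finite algebraic combinations.
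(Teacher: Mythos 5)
Your skeleton for $\FC$ --- build a free algebra of one-variable feebly continuous functions and lift it to the plane via $f\mapsto\widetilde f$ using Lemma~\ref{lem:lineability} --- is exactly the paper's strategy, but your construction of the one-variable algebra has a genuine cardinality gap. The exponential trick $g_\alpha=\exp(r_\alpha h)$ with exponents $r_\alpha$ taken from a $\Q$-linearly independent set of reals can produce at most $\co$ generators, since any such independent set has cardinality at most $\co$; and the devices you offer to boost this to $2^{\co}$ do not work: an almost disjoint family of subsets of $\N$, or any family of subsets of a fixed \emph{countable} set, has cardinality at most $2^{\aleph_0}=\co$, which is still short of $2^{\co}$. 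Producing $2^{\co}$ \emph{free} generators is precisely the hard part of the statement (it is what distinguishes this theorem from the strong $\co$-algebrability argument the paper uses later, under {\bf CH}, in Theorem~\ref{ndc}), and the paper does not construct them by hand: it invokes the known result of \cite{BGP} that the family $\ES^+(\R)$ of nonnegative functions with dense level sets is strongly $2^{\co}$-algebrable, checks that $\ES^+(\R)\subset\FC(\R)$, and then lifts. Without either citing such a result or giving an actual $2^{\co}$-sized free family (e.g.\ via an independent family of subsets of a set of size $\co$, or via ultrafilters as in \cite{CRS}), your proof establishes only strong $\co$-algebrability.

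The plan for $\TFC\setminus\VFC$ has a second, independent gap: membership in $\TFC$ requires two-feeble continuity at \emph{every} point of $\R^2$, and functions supported on sparse diagonal sets of the type $A=\{\la 1/n,1/n\ra\colon n\in\N\}\cup\{\la 0,0\ra\}$ fail this at their own support points. Indeed, at $\la 1/n,1/n\ra$ the function $\chi_A$ takes the value $1$, but every point $\la x,y\ra$ with $x>1/n$, $y>1/n$ close to it lies outside $A$, so no sequences $x_k\searrow 1/n$, $y_k\searrow 1/n$ can realize the limit $1$; the Proposition's example is two-feebly continuous \emph{at the origin only} and is not an element of $\TFC$. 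Spreading out copies of this obstruction, however sparse and disjoint, cannot repair this. The paper's construction avoids it by supporting the function on the \emph{entire} diagonal: for $f\in\ES^+(\R)$ set $F(x,y):=f(x)$ if $x=y$ and $F(x,y):=0$ otherwise. Then at a diagonal point $\la x,x\ra$ the dense level set $\{t\colon f(t)=f(x)\}$ supplies diagonal witnesses $\la t_n,t_n\ra$ with $t_n\searrow x$ and $F(t_n,t_n)=F(x,x)$, so $F\in\TFC$, while each horizontal section meets the diagonal in exactly one point, so the iterated limits in the definition of very feeble continuity are forced to be $0\neq f(x)$ and $F\notin\VFC$; moreover $f\mapsto F$ is an algebra homomorphism, so the whole free algebra transfers. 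Your ``sparseness'' requirement is aimed at preserving the failure of $\VFC$, but it is exactly what destroys membership in $\TFC$; dense level sets, not sparseness, are what make both halves hold simultaneously.
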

\begin{proof}
It is known that the family $\ES^+(\R)$, of all functions $f\colon\R\to[0,\infty)$ with dense level sets, is strongly $2^\co$-algebrable, see \cite{BGP}. It is easy to check that every function $f\in\ES^+(\R)$ belongs to $\FC(\R)$.
 Let $\{ f_\xi\colon 
\xi<2^\co\}$ be a $2^\co$-generated algebra, contained in $\FC(\R)$ and isomorphic to a free algebra. For every $\xi< 2^\co$, let $\widetilde{f}_\xi\colon\R^2\to\R$ be defined by $\widetilde{f}_\xi(x,y):=f_\xi(x)$. Clearly, $\widetilde{f}_\xi$'s are pairwise different. By Lemma \ref{lem:lineability}, all $\widetilde{f}_\xi$'s are feebly contiunous. Moreover, it is 
easy to verify that the set $\{\widetilde{f}_\xi\colon \xi<2^\co\}$ is a $2^\co$-generated algebra isomorphic to a free algebra.
Hence $\FC$ is strongly $2^{\co}$-algebrable.

The proof for $\TFC\setminus\VFC$ is similar. Observe that, if $f\in\ES^+(\R)$, then the function $F\colon \R^2\to\R$ given by 
$F(x,y):=f(x)$ whenever $x=y$, and $F(x,y):=0$, otherwise, belongs to $\TFC\setminus\VFC$. The remaining argument works as above.
\end{proof}  

Now, we will study the algebrability of the family $\VFC\setminus\FC$.
We apply the method using ultrafilters on $\N$, initiated in \cite{BBGN} and developed in \cite{CRS}.
Define the standard projections $\pi_i\colon\R^2\to\R$ ($i=1,2$) by
$\pi_1(x,y):=x$, $\pi_2(x,y):=y$.

Let $J\subset\R$ be a non-degenerate interval. We say that a set $D\subset\R^2$ is $(\VFC,J)$-{\em massive} provided there exists a map $f\colon D\to \int(J)$ such that each function $g\colon\R^2\to J$, which is equal to $f$ on $D$, 
belongs to $\VFC$. (This notion mimics a similar idea in \cite{N}.)

We say that $D\subset\R^2$ is a D-{\em set} if it has the following properties:
\begin{itemize}
\item[(i)]
the set $\pi_1[D]$ is countable and dense in $\R$;
\item[(ii)]
for each $x\in\pi_1[D]$, the $x$-section $D_x:=\{ y\in\R\colon \la x,y\ra\in D\}$ of $D$ is countable dense in $\R$;
\item[(iii)]
the $x$-sections of $D$ are pairwise disjoint.
\end{itemize}

\begin{lemma} \label{LL}
There exists a family $\D$ of $\co$-many  {\em D}-sets contained in $\R^2$ such that:
\begin{enumerate}
\item
$\pi_2[D]\cap\pi_2[D']=\emptyset$ for any distinct $D, D'\in\D$;
\item
 for every $D\in\D$, $D$ is $(\VFC,J)$-massive for any non-degenerate interval $J$.
 \end{enumerate}
\end{lemma}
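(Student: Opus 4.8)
The plan is to build all the required D-sets over one and the same countable dense set of first coordinates, and to realise their vertical sections as distinct cosets of $\Q$; this way the disjointness of the second projections in clause (1) comes essentially for free, and the only real work is the massiveness in clause (2).

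First I would record two elementary facts. The cosets of $\Q$ in $\R$ partition $\R$ into exactly $\co$ pairwise disjoint countable dense sets. Moreover, every countable dense set $C\subset\R$ can be written as a disjoint union $C=\bigcup_{\beta\in S}E_\beta$ of countably many subsets $E_\beta$, each dense in $\R$, for any prescribed countable index set $S$ (immediate after fixing an order isomorphism of $C$ onto $\Q$). Next I would set up the family. Since $|\co\times\Q|=\co$ equals the number of cosets of $\Q$, fix a bijection $(\alpha,q)\mapsto C_{\alpha,q}$ from $\co\times\Q$ onto the set of all cosets of $\Q$ in $\R$, and for each $\alpha<\co$ put
$$D_\alpha:=\{\la q,y\ra\colon q\in\Q,\ y\in C_{\alpha,q}\},\qquad \D:=\{D_\alpha\colon\alpha<\co\}.$$
The D-set axioms are then checked directly: $\pi_1[D_\alpha]=\Q$ is countable dense, so (i) holds; each section $(D_\alpha)_q=C_{\alpha,q}$ is a coset, hence countable dense, giving (ii); and distinct sections of a single $D_\alpha$ are distinct cosets, hence disjoint, giving (iii). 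Clause (1) is automatic, because for $\alpha\ne\alpha'$ all the cosets $C_{\alpha,q}$ and $C_{\alpha',q'}$ are pairwise distinct, so $\pi_2[D_\alpha]\cap\pi_2[D_{\alpha'}]=\emptyset$.

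The heart of the argument is clause (2). Fix $D=D_\alpha$ and a non-degenerate interval $J$, and choose a countable set $S$ dense in $\int(J)$. For each $q\in\Q$ split the section $(D_\alpha)_q$ into dense pieces $(E^q_\beta)_{\beta\in S}$ as in the second fact above, and define $f\colon D\to\int(J)$ by $f(q,y):=\beta$ for $y\in E^q_\beta$; note $f$ takes values in $S\subset\int(J)$, as required. Now let $g\colon\R^2\to J$ be any function agreeing with $f$ on $D$, and fix an arbitrary point $\la x_0,y_0\ra$, writing $v:=g(x_0,y_0)\in J$. I would choose $\beta_n\in S$ with $\beta_n\to v$ (possible since $S$ is dense in $\int(J)$ and $J\subset\overline{\int(J)}$), then a sequence $x_n\searrow x_0$ with $x_n\in\Q$, and for each $n$ a sequence $y^{(n)}_m\searrow y_0$ lying in the dense set $E^{x_n}_{\beta_n}\subset(D_\alpha)_{x_n}$. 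Since every point $\la x_n,y^{(n)}_m\ra$ lies in $D$, we get $g(x_n,y^{(n)}_m)=f(x_n,y^{(n)}_m)=\beta_n$, hence $\lim_m g(x_n,y^{(n)}_m)=\beta_n$ and $\lim_n\lim_m g(x_n,y^{(n)}_m)=\lim_n\beta_n=v=g(x_0,y_0)$. Thus $g$ is very feebly continuous at $\la x_0,y_0\ra$, and since the point was arbitrary, $g\in\VFC$, so $D_\alpha$ is $(\VFC,J)$-massive.

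I expect the last verification to be the only genuine obstacle: the decisive point is that each approximating point used to witness very feeble continuity is forced to lie inside $D$, where $g$ is pinned to the controlled value $\beta_n$, so the behaviour of $g$ off $D$ is irrelevant and $g$ can be made to ``aim'' its iterated limit at its own value $v$ at the target point. The two preliminary partition facts and the coset bookkeeping are routine. I would also emphasise that the witness $f$ is allowed to depend on $J$, which is exactly what the definition of massiveness permits; hence the single family $\D$ serves every non-degenerate interval $J$ simultaneously.
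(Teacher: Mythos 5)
Your proof is correct, and the lemma as you prove it (in particular, with clauses (1), (iii) intact) supports the later theorem exactly as the paper's version does. The overall strategy is the same as the paper's -- both realise each D-set as a countable union of vertical columns over a countable dense base and verify massiveness by a direct density/diagonal argument -- but the bookkeeping differs in two genuine ways. First, the paper gets clause (1) by decomposing $\R$ into $\co$-many countable dense sets $C_\alpha$ and placing both projections of $D_\alpha$ inside $C_\alpha$: the columns are indexed by an enumeration $(c^\alpha_n)_n$ of $C_\alpha$ and the sections are the pieces $C_{\alpha,n}$ of a further partition of $C_\alpha$. You instead let every D-set share the first projection $\Q$ and take the sections to be pairwise distinct cosets of $\Q$, which makes both (iii) and clause (1) automatic. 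Second, and more substantively, the massiveness witnesses encode values differently: in the paper $f$ is \emph{constant on each column}, equal to $q_n$ precisely when the column's base point lies in $C_{\alpha,n}$, so the verification selects the columns $x_n$ according to target values $q_{k_n}\to g(x,y)$ and may then choose the inner sequences $y^{(n)}_m$ freely within the sections; in your construction each column carries \emph{every} value of the dense set $S$ on a dense subset of the section, so any rational sequence $x_n\searrow x_0$ serves, and the targets $\beta_n\to v$ are selected instead by placing $y^{(n)}_m$ inside the correct piece $E^{x_n}_{\beta_n}$. The two devices are interchangeable here because very feeble continuity allows the inner sequence to depend on $n$. A minor point in your favour: you keep $f$ valued in a countable dense subset of $\int(J)$ and explicitly handle the case where $g(x_0,y_0)$ is an endpoint of $J$, whereas the paper takes its dense set $\{q_n\}$ in $J$ itself, which is slightly loose against the requirement $f\colon D\to\int(J)$ in the definition of massiveness.
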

\begin{proof}
Decompose $\R$ into $\co$-many countable dense sets $\{ C_\alpha \colon \alpha<\co\}$. For every $\alpha<\co$, decompose $C_\alpha$ into infinitely many dense sets $\{ C_{\alpha,n}\colon n\in\N\}$. List elements of $C_\alpha$ as a one-to-one sequence
$(c_n^\alpha)_{n\in\N}$
and let $D_\alpha:=\bigcup_{n\in\N}\{c^\alpha_n\}\times C_{\alpha,n}$ for $\alpha<\co$. Clearly, each $D_\alpha$, for $\alpha<\co$, is a D-set. Define 
$\D:=\{ D_\alpha\colon \alpha<\co\}$. Then condition (1) is fulfilled.

To show (2), fix a non-degenerate interval $J\subset\R$ and $D_\alpha\in\D$. Let $\{ q_n\colon n\in\N\}$ be 
a countable dense subset of $J$. Then the function $f\colon D_\alpha\to J$, defined by $f(x,y):=q_n$, whenever
$\la x,y\ra\in D_\alpha$ with $x\in C_{\alpha,n}$, witnesses that $D_\alpha$ is $(\VFC,J)$-massive.
Indeed, consider an extension $g\colon\R^2\to J$ of $f$ and 
fix $\la x,y\ra\in\R^2$. Let $(q_{k_n})$ be a subsequence of $(q_n)$ convergent to $g(x,y)$. 
Choose a sequence $x_n\searrow x$ with $x\in C_{\alpha, k_n}$ for every $n\in\N$. Thus
for each $n\in\N$, we have $x=c^\alpha_{j_n}$ for a unique $j_n\in\N$. Now, for each $n\in\N$, pick a sequence 
$(y_m^{(n)})_m$ with terms in $C_{\alpha,j_n}$ and such that $y_m^{(n)}\searrow y$. Then $\la x_n,y_m^{(n)}\ra\in D_\alpha$
for all $n,m\in\N$, so the vaules of $f$ and $g$ are equal at these points.
For each $n\in\N$, we have $\lim_m f(x_n,y_m^{(n)})=\lim_m q_{k_n}=q_{k_n}$. 
Hence $\lim_n\lim_m g(x_n,y_m^{(n)})=\lim_n q_{k_n}=g(x,y)$. Thus $g$ is very feebly continuous at $\la x,y\ra$.
\end{proof}

Let $\H^n$ denote the family of all polynomials  from $\R^n$ to $\R$ without constant term.

\begin{theorem}
There exists a family $\F\subset\R^{\R^2}$ of cardinality $2^\co$ such that, for each $h\in\H^n$ and every sequence $\la f_1,\ldots,f_n\ra$ with terms in $\F$, we have $h(f_1,\ldots,f_n)\in\VFC\setminus \FC$. Consequently, the family $\VFC\setminus\FC$ is strongly $2^\co$-algebrable.
\end{theorem}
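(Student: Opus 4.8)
The plan is to present $\VFC\setminus\FC$ as (essentially) a free algebra sitting on top of the massive sets of Lemma~\ref{LL}, where an independent family of ``colourings'' supplies the cardinal $2^\co$. I begin with a reduction: the zero function is continuous, hence feebly continuous, so $0\notin\VFC\setminus\FC$; consequently, once $h(F_{t_1},\dots,F_{t_k})\in\VFC\setminus\FC$ is established for every nonzero $h\in\H^k$ without constant term and distinct $t_1,\dots,t_k$, the combination is automatically nonzero, which is exactly the freeness needed for strong $2^\co$-algebrability. So it suffices to build $\F=\{F_t:t\in T\}$ with $|T|=2^\co$ all of whose nontrivial polynomial combinations lie in $\VFC\setminus\FC$. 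For the setup I take $W=\bigcup\D$, the union of the $\co$ D-sets of Lemma~\ref{LL}, arranged (as in its proof) so that the projections $C_\alpha=\pi_1[D_\alpha]$ partition $\R$. Then $\pi_1[W]=\R$, every vertical section $W_c$ ($c\in\R$) is countable dense, and—combining disjointness of $x$-sections inside each $D_\alpha$ with property~(1) of Lemma~\ref{LL}—the sections $\{W_c:c\in\R\}$ are pairwise disjoint. Since the product $[1,2]^{T}$ with $|T|=2^\co$ has density $\co$ (Hewitt--Marczewski--Pondiczery; this is the combinatorial core of the independent-family method of \cite{BBGN,CRS}), fix $\{p_\alpha:\alpha<\co\}$ dense in $[1,2]^{T}$ and set $\phi_t(\alpha):=p_\alpha(t)$, so that for distinct $t_1,\dots,t_k$ the set $\{\la\phi_{t_1}(\alpha),\dots,\phi_{t_k}(\alpha)\ra:\alpha<\co\}$ is dense in $[1,2]^k$. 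Finally define $F_t\colon\R^2\to[1,2]$ by $F_t(c,y):=\phi_t(\alpha)$ when $\la c,y\ra\in D_\alpha$, and $F_t\equiv b$ off $W$, for a fixed $b\in[1,2]$.

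Fix distinct $t_1,\dots,t_k$ and nonzero $h\in\H^k$, put $G:=h(F_{t_1},\dots,F_{t_k})$ and $\Psi:=h(\phi_{t_1},\dots,\phi_{t_k})$, so $G\equiv\Psi(\alpha)$ on $D_\alpha$ and $G\equiv\beta:=h(b,\dots,b)$ off $W$. I would verify $G\in\VFC$ directly, mirroring the proof of Lemma~\ref{LL}(2). At $\la c,y\ra\in D_\alpha$ (value $\Psi(\alpha)$): as $C_\alpha$ is dense, take $c_j\searrow c$ with $c_j\in C_\alpha$, then $y^{(j)}_m\searrow y$ inside the dense section $W_{c_j}$; since $G(c_j,y^{(j)}_m)=\Psi(\alpha)$ we get $\lim_j\lim_m G(c_j,y^{(j)}_m)=\Psi(\alpha)=G(c,y)$. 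At $\la x,y\ra\notin W$ (value $\beta$): take any $x_j\searrow x$, then $y^{(j)}_m\searrow y$ inside the co-countable complement of the single section $W_{x_j}$, so $G(x_j,y^{(j)}_m)=\beta$ and the iterated limit is $\beta$. Hence $G$ is very feebly continuous everywhere.

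For $G\notin\FC$: density of $\la\phi_{t_1},\dots,\phi_{t_k}\ra$ in $[1,2]^k$ together with nonconstancy of the polynomial $h-\beta$ yields some $\alpha_0$ with $\Psi(\alpha_0)\neq\beta$; choose $c\in C_{\alpha_0}$ and $y\in W_c$, a point of value $\Psi(\alpha_0)$. Given any $x_n\searrow c$ and any \emph{single} sequence $y_m\searrow y$, the sections $\{W_{x_n}\}$ are pairwise disjoint, so $(y_m)$ is eventually contained in $W_{x_n}$ for at most one index $n$; for every other $n$ the inner limit $\lim_m G(x_n,y_m)$, when it exists, equals $\beta$. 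Therefore whenever the iterated limit exists it equals $\beta\neq\Psi(\alpha_0)=G(c,y)$, so no admissible pair of sequences witnesses feeble continuity—precisely the obstruction isolated in Example~\ref{EX1}. The same computation shows each $F_t$ itself belongs to $\VFC\setminus\FC$. Combining the two parts, every nontrivial $h(F_{t_1},\dots,F_{t_k})\in\VFC\setminus\FC$, and by the opening reduction $\VFC\setminus\FC$ is strongly $2^\co$-algebrable.

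The hard part is reconciling two opposing demands for $2^\co$ generators \emph{at once}: feeble continuity must \emph{fail} somewhere, which forces the disjoint-section ``Example~\ref{EX1}'' pattern, while very feeble continuity must \emph{hold everywhere for every polynomial combination}. A single massive D-set has countable first projection and so supports only $\co$ functions that are constant on sections; the crucial move is to pass to the union $W$, which has continuum-many pairwise disjoint dense sections, and to colour those sections by a product-dense (independent) family. Then density in $[1,2]^k$ simultaneously gives the nonvanishing needed for freeness and a colour $\Psi(\alpha_0)\neq\beta$ needed to break feeble continuity, while the colouring being constant on sections keeps every combination very feebly continuous through the argument of Lemma~\ref{LL}.
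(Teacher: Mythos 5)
Your proof is correct, and it reaches the theorem by a genuinely different route than the paper. The paper follows the ultrafilter method of \cite{CRS}: the D-sets of Lemma~\ref{LL} are indexed by pairs $\la h,p\ra\in\H^n\times\{1,\dots,n\}^{\N}$, the $(\VFC,h[\R^n])$-massive witnesses $g_{h,p}$ are lifted to vector-valued maps $\vec{v}_{h,p}$ with $h(\vec{v}_{h,p}(z))=g_{h,p}(z)$, and the generators are $f_{\U}(z):=\vec{v}_{h,p}(z)_{\bar{p}(\U)}$ for $\U\in\beta\N$; given distinct $\U_1,\dots,\U_n$ and $h$, one chooses $p$ with $\bar{p}(\U_i)=i$, so that $h(f_{\U_1},\dots,f_{\U_n})$ restricted to $D_{h,p}$ \emph{equals} the massive witness $g_{h,p}$, whence membership in $\VFC$ is automatic from Lemma~\ref{LL}(2); failure of feeble continuity is then derived from essentially the same disjoint-section observation you use, and cardinality $2^{\co}$ comes from $|\beta\N|=2^{\co}$. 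You instead index generators by a $\co$-dense subset of $[1,2]^{2^{\co}}$ (Hewitt--Marczewski--Pondiczery), and make each generator \emph{constant} on each $D_\alpha$ and constant off $W=\bigcup\D$, so that every polynomial combination is itself constant on each $D_\alpha$ and off $W$. This bypasses both $\beta\N$ and the massiveness notion: very feeble continuity is reverified directly (easy, given constancy, dense sections inside each $D_\alpha$, and countable sections of $W$), while feeble continuity fails at any $D_{\alpha_0}$ whose colour $\Psi(\alpha_0)$ differs from the background value $\beta$ --- such $\alpha_0$ exists by density of the colour tuples in $[1,2]^k$ together with nonconstancy of $h$. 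Your opening reduction (since $0\in\FC$, membership of all nontrivial combinations in $\VFC\setminus\FC$ already yields freeness) is also tidier than the paper's appeal to the nonvanishing argument at the end of \cite[Theorem 3.1]{CRS}. What the paper's route buys is economy of exposition, reusing \cite{CRS} nearly verbatim and letting the massiveness abstraction do the $\VFC$ work once and for all; what yours buys is a more elementary, self-contained argument whose only nontrivial external input is the existence of a dense set of size $\co$ in $[1,2]^{2^{\co}}$.

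Three small points you should state explicitly, though none is a real gap. First, you use the internal structure of the construction in the proof of Lemma~\ref{LL} (that the sets $C_\alpha=\pi_1[D_\alpha]=\pi_2[D_\alpha]$ partition $\R$), not merely the statement of the lemma; you do flag this, and it is legitimate. Second, pairwise distinctness of the $F_t$ (needed for $|\F|=2^{\co}$) should be noted; it follows at once from your own reduction applied to $F_t-F_{t'}$. Third, the nonconstancy of $h-\beta$ on $[1,2]^k$ deserves one sentence: a polynomial constant on a nonempty open set is constant on $\R^k$, and a constant polynomial without constant term is zero, so a nonzero $h\in\H^k$ is nonconstant on $(1,2)^k$.
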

\begin{proof}
We use the ideas from the proof of \cite[Theorem 3.1]{CRS}. Let $\H:=\bigcup_{n\in\N}\H^n\times \{1,\dots ,n\}^\N$. List all sets of the family $\D$ from Lemma~\ref{LL} in a one-to-one fashion as $\{ D_{h,p}\colon \la h,p\ra\in\H\}$. For every 
$\la h,p\ra\in\H^n\times \{ 1,\dots ,n\}^\N$, let $g_{h,p}\colon D_{h,p}\to \int(h[\R^n])$ witness the fact that $D_{h,p}$ is $(\VFC,h[\R^n])$-massive. For each $z=\la x,y\ra$, $z\in D_{h,p}$, let $\vec{v}_{h,p}(z)=(\vec{v}_{h,p}(z)_{1},\ldots,\vec{v}_{h,p}(z)_{n})\in\R^n$ be such that $h(\vec{v}_{h,p}(z))=g_{h,p}(z)$. Moreover, we put $\vec{v}_{h,p}(z):=0$ for $z\in\R^2\setminus D_{h,p}$. 

Let $\bar{p}\colon\beta\N\to\{ 1,\dots ,n\}$ be a continuous extension of $p$ to $\beta\N$, the \v{C}ech-Stone compactification of $\N$. For every ultrafilter 
$\U\in\beta\N$, we define the function $f_\U\colon\R^2\to\R$ as follows. If $z\in D_{h,p}$ for some $\la h,p\ra\in\H$ 
then $f_\U(z):=\vec{v}_{h,p}(z)_{\bar{p}(\U)}$. Otherwise, let $f_\U(z):=0$.

Then the family $\F:=\{ f_\U\colon\U\in\beta\N\}$ is as we need. Indeed, fix  distinct $\U_1,\ldots,\U_{n}\in\beta\N$ and an $h\in\H^n$. 
Then $f_{\U_1},\ldots,f_{\U_{n}}$ are distinct, see the last part of the proof of \cite[Theorem 3.1]{CRS}.

Let $f:=h(f_{\U_1},\ldots,f_{\U_{n}})$ and $J:=h[\R^n]$. The proof of the fact that $f\in\VFC$ is exactly the same as in \cite[Theorem 3.1]{CRS}. Let us give some details for the reader's convenience. Choose a partition $\{U_1,\dots ,U_n\}$ of $\N$ such that $U_i\in \U_j$ if and only if $i=j$. Let $p\in\{ 1,\dots n\}^\N$ be such that $p^{-1}[\{ i\}]=U_i$ for each $i\in\{ 1,\dots ,n\}$.
Consider $\bar{p}\colon\beta\N\to\{ 1,\dots ,n\}$ and observe that $\bar{p}(\U_i)=i$ for every $i\in\{ 1,\dots ,n\}$.
Then one can check that, for each $z\in D_{h,p}$,
$$f(z)=h(\vec{v}_{h,p}(z))=g_{h,p}(z).$$
Since $D_{h,p}$ is $(\VFC, J)$-massive, the extension $f$ of $g_{h,p}$ to $\R^2$ is in $\VFC$ as desired.

Moreover, observe that if $x\ne x'$ then there is no $y\in\R$ such that $f(x,y)\ne 0\ne f(x',y)$. 
Indeed, if  $f(x,y)\ne 0$ then $f_{\U_i}(x,y)\ne 0$ for some $i\le n$, and this implies that $\la x,y\ra\in D_{t,p}$ for some $\la t,p\ra\in\H$.
Thus, if $f(x,y)\ne 0\ne f(x',y)$ then $\la x,y\ra\in D_{t,p}$ and $\la x',y\ra\in D_{t',p'}$ for some $\la t,p\ra, \la t',p'\ra\in\H$. By condition (1) in Lemma~\ref{LL}, the pairs $\la t,p\ra$ and  $\la t',p'\ra$ can not be distinct. Hence $\la t,p\ra=\la t',p'\ra$ and then the condition (iii) yields $x=x'$.
 This shows that 
$f$ is not feebly continuous at any point $\la x,y\ra\in \R^2\setminus f^{-1}[\{0\}]$. Hence $f\notin\FC$.

The proof that $\F$ is of cardinality $2^\co$ is given in \cite{CRS}; it follows from $|\beta\N|=2^\co$ and the fact that all maps $f_\U$ are distinct.
Finally, the family $\F$ witnesses that $\VFC\setminus \FC$ is strongly $2^\co$-algebrable. To see it,  we need to observe that $h(f_{\U_1},\ldots,f_{\U_{n}})\ne 0$ for all $h\in\H$ and $\U_1,\ldots \U_n\in\beta\N$; cf.
the final part of the proof of Theorem~3.1  in \cite{CRS}.
\end{proof}



As it was mentioned in Section 2, Leader constructed, under {\bf CH}, a function $f\colon\R^2\to\R$ which is nowhere
feebly continuous, that is, $FC(f)=\emptyset$. We do not know whether nowhere feebly continuous functions can exist in models of ZFC without {\bf CH}. 

Below, we will modify a bit Leader's construction. In fact we prove that the function $g:=|f|$, where $f$ comes from Leader's construction, is also nowhere feebly continuous.
Moreover, the range of $g$ equals $\{0\}\cup\N$, and
$g$ is symmetric, that is $g(x,y)=g(y,x)$ for all $x,y\in\R$.
The function $g$ will help us to infer that, under {\bf CH}, the set of nowhere feebly continuous functions is strongly $\co$-algebrable.
We do not know whether this result can be improved to obtain strong $2^\co$-algebrability. 

\begin{proposition} \label{lepszyLeader}
Assume {\bf CH}. There exists a symmetric surjection $g\colon\R^2\to\{0\}\cup \N$ that is nowhere feebly continuous.
Additionally, assume that $h\colon\R\to\R$ is a continuous function, being strictly monotone on every set from 
a finite partition of $\R$ into intervals, with $\lim_{x\to\infty}h(x)=\pm\infty$. Then $h\circ g$ is nowhere feebly continuous.
\end{proposition}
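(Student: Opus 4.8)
The plan is to isolate a single, stronger property of Leader's function that yields all three conclusions at once, and then to verify that a symmetric, surjective variant of Leader's construction delivers it. Writing $z=\la x,y\ra$ and calling a pair of sequences $x_n\searrow x$, $y_m\searrow y$ an \emph{approach} to $z$, I would aim for the function $f$ produced by Leader's recursion (under \textbf{CH}) to satisfy
\[
\limsup_{n\to\infty}\,\limsup_{m\to\infty}\,|f(x_n,y_m)|=\infty
\]
for every $z\in\R^2$ and every approach to $z$; call this property $(\star)$. First I would recall Leader's transfinite construction and check that the mechanism defeating feeble continuity in fact forces the column values to escape every bound along the chosen approach; since under \textbf{CH} there are only $\co$ approaches, this is arranged by an $\omega_1$-recursion, and the only new point is to record that the values are pushed to $+\infty$ rather than merely away from the target. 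During the same recursion I would secure two extra features: that $f$ is onto $\Z$, and that $f$ is symmetric, the latter by processing the pair $\{z,z'\}$ with $z'=\la y,x\ra$ simultaneously and setting $f(x,y)=f(y,x)$. Then $g:=|f|$ is a symmetric surjection onto $\{0\}\cup\N$, and since $|g|=|f|$, it inherits $(\star)$ in the form $\limsup_n\limsup_m g(x_n,y_m)=\infty$.

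The remaining, easy half is to derive every nowhere–feeble–continuity claim from $(\star)$ alone. For $g$ itself: if $g$ were feebly continuous at $z$, some approach would give $\lim_n\lim_m g(x_n,y_m)=g(z)$, whence $\limsup_n\limsup_m g(x_n,y_m)=g(z)<\infty$, contradicting $(\star)$. For $h\circ g$ the key observation is that the hypotheses on $h$ make $h[\{0\}\cup\N]$ a closed discrete (locally finite) subset of $\R$ and make $h$ finite-to-one on $\{0\}\cup\N$: indeed $h(k)\to\pm\infty$ together with strict monotonicity on the unbounded right interval of the partition shows that only finitely many integers land in any bounded interval, while each value is attained at most once per monotone piece. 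Consequently a convergent sequence drawn from $h[\{0\}\cup\N]$ is eventually constant, and each fibre $S:=h^{-1}(\{h(g(z))\})\cap(\{0\}\cup\N)$ is finite.

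Now suppose $h\circ g$ were feebly continuous at $z$, witnessed by an approach with $\lim_n\lim_m h(g(x_n,y_m))=h(g(z))$. For each large $n$ the inner limit $\beta_n=\lim_m h(g(x_n,y_m))$ exists and lies in the locally finite set $h[\{0\}\cup\N]$, so the sequence $(h(g(x_n,y_m)))_m$ is eventually constant equal to $\beta_n$; moreover $\beta_n\to h(g(z))$ forces $\beta_n=h(g(z))$ for all large $n$. Hence $h(g(x_n,y_m))=h(g(z))$, i.e. $g(x_n,y_m)\in S$, for all large $m$, and therefore $\limsup_n\limsup_m g(x_n,y_m)\le\max S<\infty$, again contradicting $(\star)$. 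Note that only the discreteness and finite-to-one-ness of $h$ on the integers are used, so continuity of $h$ enters solely through these consequences.

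The main obstacle is the first half: confirming that Leader's recursion can be strengthened to $(\star)$ while keeping $f$ symmetric and surjective. The delicate point is that an approach chooses a single sequence $y_m\searrow y$ shared by all columns $x_n$, so one cannot simply force each individual column to blow up near $y$ — indeed no $F\colon\R\to\N$ can satisfy $\lim_{t\searrow s}F(t)=\infty$ for all $s$, by a short counting argument, since each set $\{t:F(t)<N\}$ would then be right-separated and hence countable. Instead the recursion must arrange the bounded-value sets of distinct columns to be sufficiently independent near each $y$ so that no common $y_m$ keeps cofinally many columns bounded. Handling this uniformly over all $\co$ approaches, and reconciling it with the symmetry constraint along the diagonal, is where the real work lies; the deductions of the second and third paragraphs are then routine.
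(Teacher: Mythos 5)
Your reductions in the second and third paragraphs are sound: granting property $(\star)$, both nowhere-feeble-continuity claims follow, and your treatment of $h$ (the image $h[\{0\}\cup\N]$ is closed and discrete, $h$ is finite-to-one on $\{0\}\cup\N$, so convergent sequences in the image are eventually constant and each fibre $S$ is finite) is exactly the mechanism the paper also exploits. The genuine gap is that the entire first half --- the existence, under {\bf CH}, of a symmetric surjection satisfying $(\star)$ --- is never proved. You explicitly defer it (``this is where the real work lies''), speculating that Leader's recursion must be strengthened so that the bounded-value sets of distinct columns are ``sufficiently independent'' near each $y$, uniformly over all $\co$ approaches and compatibly with symmetry. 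Since $(\star)$ is the sole input to your deductions, the proposal as it stands reduces the proposition to an unproved combinatorial statement rather than proving it; the deferred part is precisely the core of the result.

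What you missed is that no strengthened recursion is needed: the plain well-order construction (the paper's variant of Leader's function) already satisfies $(\star)$, with a short pigeonhole verification. Enumerate $\R=\{r_\alpha\colon\alpha<\omega_1\}$, fix for each $\alpha$ a one-to-one enumeration $\{r^\alpha_n\colon n\in\N\}$ of $\{r_\beta\colon\beta<\alpha\}$, and set $g(x,x):=0$ and $g(x,y):=k$ when $\{x,y\}=\{r_\alpha,r_\beta\}$ with $\beta<\alpha$ and $r_\beta=r^\alpha_k$; this $g$ is symmetric and onto $\{0\}\cup\N$. Now suppose $(\star)$ fails, i.e.\ $\limsup_m g(x_n,y_m)\le N$ for all large $n$. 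For such $n$, only finitely many $y_m$ can precede $x_n$ in the well-order: distinct points preceding $x_n$ receive distinct indices in the enumeration attached to $x_n$, so infinitely many of them would make the inner $\limsup$ infinite. Hence, eventually in $m$, $x_n$ precedes $y_m$ and $g(x_n,y_m)$ is the index of $x_n$ in the (injective) enumeration attached to $y_m$, bounded by $N$. Choosing $N+1$ admissible values of $n$ and then one sufficiently large $m$ forces $N+1$ distinct points to occupy indices $\le N$ in a single injective enumeration --- a contradiction. Note that your worry about the single shared sequence $(y_m)$ is dissolved exactly by this pigeonhole; no ``independence'' needs to be engineered. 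The paper itself does not isolate $(\star)$: it runs this same uniqueness/pigeonhole argument directly against a hypothetical feeble-continuity witness (with $h^{-1}[h(k)]\cap\N$ playing the role of your finite set $S$ in the composed case), but the combinatorics are identical, so your plan becomes a complete and correct proof once this construction and verification are inserted.
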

\begin{proof}
Let $\{ r_\alpha\colon\alpha<\omega_1\}$ be a one-to-one enumeration of $\R$, and for each $\alpha<\omega_1$, let $\{r^\alpha_n\colon n<\omega\}$ be a one-to-one enumeration of the set $\{ r_\beta\colon\beta<\alpha\}$. Then define $g(x,y):=0$ whenever $x=y$. Otherwise $\{ x,y\}=\{r_\alpha, r_\beta\}$ for some $\beta<\alpha<\omega_1$ and then $r_\beta=r^\alpha_k$ for exactly one  $k\in\N$. Then put $g(x,y):=k$. Obviously, $g$ is a symmetric function from $\R^2$ onto $\{0\}\cup\N$. 

We will verify that $g$ is feebly continuous at no point $\la x,y\ra$. Clearly, $g$ is feebly continuous at no point 
of the form $\la x,x\ra$. Now, fix $\la x,y\ra\in\R^2$ with $x\ne y$, and let $g(x,y)=k>0$. Suppose $g$ is feebly continuous at $\la x,y\ra$, so there are sequences $x_n\searrow x$, $y_m\searrow y$ with $\lim_n\lim_m g(x_n,y_m)=k$. We may assume that $\lim_mg(x_n,y_m)=k$ for every $n\in\N$, hence for every $n$ there is $m_n$ such that $g(x_n,y_m)=k$ for every $m>m_n$. Fix $n$, and let $x_n=r_\alpha$. Observe that there is at most one $m>m_n$ such that $y_m=r_\beta$ where $\beta<\alpha$. Increasing $m_n$ as needed, we can assume that 
if $m>m_n$ then $y_m=r_\beta$ with $\alpha<\beta$. Now, fix $m>\max(m_1,m_2)$. Let $x_1=r_{\alpha_1}$, $x_2=r_{\alpha_2}$, $y_m=r_\beta$. Then $\alpha_1<\beta$ and  $\alpha_2<\beta$. Since $g(x_1,y_m)=k$, we have $r_{\alpha_1}=r^\beta_k=r_{\alpha_2}$. Hence $x_1=x_2$ which yields a contradiction.

Let us prove the second assertion. From the assumptions it follows that $h$ is finite-to-one and it maps 
$\{0\}\cup\N=g[\R^2]$ onto a discrete set. To show that $h\circ g$ is nowhere feebly continuous, we modify the above argument.
Since $h[\{0\}\cup\N]$ is a discrete set, it follows that $h\circ g$ is feebly continuous at no point 
$\la x,x\ra$. 

Now, fix $\la x,y\ra$ with $x\neq y$ and let $g(x,y)=k>0$.
Suppose $h\circ g$ is feebly continuous at $\la x,y\ra$, so there are sequences $x_n\searrow x$, $y_m\searrow y$ with 
$\lim_n\lim_m (h\circ g)(x_n,y_m)=h(k)$. We may assume that $\lim_m(h\circ g)(x_n,y_m)=h(k)$ for every $n\in\N$.
Hence for every $n$ there is $m_n$ such that $(h\circ g)(x_n,y_m)=h(k)$ for every $m>m_n$.
Since $h$ is finite-to-one, assume that $h^{-1}[h[\{k\}]]\cap\N=\{k_1,\dots,k_p\}$. 
Thus
$g(x_n,y_m)\in \{k_1,\dots,k_p\}$ for all $n$ and $m>m_n$. Fix $n$, and let $x_n=r_\alpha$. 
Observe that there is at most one $m>m_n$ such that $y_m=r_\beta$ where $\beta<\alpha$. Increasing $m_n$ as needed, we can assume that 
if $m>m_n$ then $y_m=r_\beta$ with $\alpha<\beta$.

Fix $m>\max(m_1,\dots , m_{p+1})$. Let $x_i=r_{\alpha_i}$ for $i=1,\dots ,p+1$, and let $y_m=r_\beta$. 
Then $\alpha_i<\beta$ for $i=1,\dots ,p+1$. We have $g(x_i,y_m)\in\{k_1,\dots, k_p\}$ for $i=1,\dots, p+1$.
By the pigeon hole principle, we can find $j\in\{ 1,\dots ,p\}$ and two distinct indices $i_1,i_2\in\{ 1,\dots, p+1\}$ 
such that  $g(x_{i_1},y_m)=k_j=g(x_{i_2},y_m)$. Then by the definition of $g$,
we have $r_{\alpha_{i_1}}=r^\beta_{k_j}=r_{\alpha_{i_2}}$. Hence $x_{i_1}=x_{i_2}$ which yields a~contradiction.
\end{proof}

\begin{theorem} \label{ndc}
Assume {\bf CH}. Then the set of nowhere feebly continuous functions from $\R^2$ to $\R$ is strongly $\co$-algebrable.
\end{theorem}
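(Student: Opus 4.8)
The plan is to build the free algebra by composing the nowhere feebly continuous surjection $g\colon\R^2\to\{0\}\cup\N$ from Proposition~\ref{lepszyLeader} with a suitable $\co$-parameter family of real functions. The guiding observation is that composition with $g$ turns a polynomial combination of one-variable functions into a single composition: if $h_1,\dots,h_k\colon\R\to\R$ and $f_i:=h_i\circ g$, then for any polynomial $P$ in $k$ variables one has $P(f_1,\dots,f_k)=H\circ g$, where $H:=P(h_1,\dots,h_k)$ is the one-variable function $H(t)=P(h_1(t),\dots,h_k(t))$. Hence, by the second assertion of Proposition~\ref{lepszyLeader}, it suffices to exhibit a family $\{h_\xi\colon\xi<\co\}$ of functions $\R\to\R$ such that, for every $k$, every nonzero $P\in\H^k$ and all distinct $\xi_1,\dots,\xi_k$, the function $H=P(h_{\xi_1},\dots,h_{\xi_k})$ is continuous, strictly monotone on each member of a finite partition of $\R$ into intervals, and satisfies $\lim_{t\to\infty}H(t)=\pm\infty$; then each $H\circ g$, hence each $P(f_{\xi_1},\dots,f_{\xi_k})$, is nowhere feebly continuous.

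For the generators I would take exponentials. Fix a set $\{r_\xi\colon\xi<\co\}\subset(0,\infty)$ that is linearly independent over $\Q$ (a Hamel basis has cardinality $\co$, and we may choose its elements positive), and put $h_\xi(t):=e^{r_\xi t}$ and $f_\xi:=h_\xi\circ g$, so $f_\xi(x,y)=e^{r_\xi g(x,y)}$. Given distinct $\xi_1,\dots,\xi_k$ and a nonzero $P\in\H^k$, expanding $H(t)=P(e^{r_{\xi_1}t},\dots,e^{r_{\xi_k}t})$ into monomials yields $H(t)=\sum_{j=1}^N c_j e^{s_j t}$, where each exponent has the form $\sum_i a_i r_{\xi_i}$ with nonnegative integers $a_i$ not all $0$ (here one uses that $P$ has no constant term). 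Since the $r_{\xi_i}$ are linearly independent over $\Q$, distinct monomials produce distinct exponents, so after collecting terms all $s_j$ are distinct and strictly positive and all $c_j\ne 0$; in particular $H\not\equiv 0$.

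It remains to check that such an exponential polynomial $H$ meets the hypotheses of Proposition~\ref{lepszyLeader}. Continuity is clear, and since the term of largest exponent dominates as $t\to+\infty$ we obtain $\lim_{t\to\infty}H(t)=\pm\infty$ (the sign being that of its coefficient). The crux is piecewise strict monotonicity. For this I would invoke the classical fact that a real exponential polynomial $\sum_{j=1}^N c_j e^{s_j t}$ with distinct real exponents and nonzero coefficients has at most $N-1$ real zeros (the functions $e^{s_j t}$ form a Tchebycheff system). Applying this to $H'(t)=\sum_{j=1}^N c_j s_j e^{s_j t}$, which is again an exponential polynomial with the same exponents, shows that $H$ has at most $N-1$ critical points; these split $\R$ into finitely many intervals on each of which $H$ is strictly monotone. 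This zero-counting step is the main obstacle, but it is standard and everything else is routine.

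With the hypotheses verified, Proposition~\ref{lepszyLeader} gives that $P(f_{\xi_1},\dots,f_{\xi_k})=H\circ g$ is nowhere feebly continuous for every nonzero $P\in\H^k$ and all distinct $\xi_1,\dots,\xi_k$; moreover it is not the zero function, since $\lim_{t\to\infty}H(t)=\pm\infty$ forces $H(n)\ne 0$ for large $n\in\N=g[\R^2]\setminus\{0\}$. The maps $f_\xi$ are pairwise distinct (they take the distinct values $e^{r_\xi}$ at any point where $g=1$), so $\{f_\xi\colon\xi<\co\}$ has cardinality $\co$. By the characterization of generators of a free algebra recalled above, this family generates a free algebra contained in the set of nowhere feebly continuous functions together with $0$, which proves strong $\co$-algebrability.
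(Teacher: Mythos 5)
Your proposal is correct and follows essentially the same route as the paper: compose the function $g$ from Proposition~\ref{lepszyLeader} with exponential-like functions whose exponents come from a (positive) Hamel basis, and check that every nonzero constant-free polynomial combination reduces to $H\circ g$ for an exponential polynomial $H$ satisfying the hypotheses of the second part of Proposition~\ref{lepszyLeader}. The only difference is one of presentation: the paper outsources the two technical facts — that such $H$ are piecewise strictly monotone with infinite limit, and that the Hamel-basis exponentials are free generators — to citations of \cite{BBF} (Proposition~7 and Lemma~8), whereas you prove them directly via $\Q$-linear independence of the exponents and the Tchebycheff/Rolle zero-counting bound for exponential polynomials.
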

\begin{proof} 
We use the method proposed in \cite[Proposition~7]{BBF}. Consider the function $g$ from Proposition~\ref{lepszyLeader}.
It suffices to show that $h\circ g$
is nowhere feebly continuous for every exponential like function $h\colon\R\to\R$ of the form
$$h(x):=\sum_{i=1}^m a_ie^{\beta_i x}$$
where $\beta_i>0$ for $i=1,\dots ,m$. Then functions of the form $x\mapsto e^{\beta g(x)}$, where parameters $\beta$ are taken from a Hamel basis (of $\R$ over $\Q$), are free generators of an algebra
included in the set of 
continuous functions.
It is easy to check that $h$ given by the above formula satisfies the conditions from the second part of Proposition~\ref{lepszyLeader};
cf. \cite[Lemma 8]{BBF}. Hence we obtain the assertion.
\end{proof}

\end{document}